\newtheorem{theorem}{Theorem}
\newtheorem{lemma}[theorem]{Lemma}
\title{Complete colorings of planar graphs}
\author{G. Araujo-Pardo\footnotemark[1]
\and F. E. Contreras-Mendoza\footnotemark[2]
\and S. J. Murillo-Garc{\' i}a \footnotemark[2]
\and A. B. Ramos-Tort \footnotemark[2]
\and C. Rubio-Montiel\footnotemark[3]}
\begin{document}
\maketitle

\def\thefootnote{\fnsymbol{footnote}}
\footnotetext[1]{Instituto de Matem{\' a}ticas at Universidad Nacional Aut{\' o}noma de M{\' e}xico, 04510, Mexico City, Mexico. E-mail address: {\tt garaujo@matem.unam.mx}.}
\footnotetext[2]{Facultad de Ciencias at Universidad Nacional Aut{\' o}noma de M{\' e}xico, 04510, Mexico City, Mexico. E-mail addresses: {\tt [esteban.contreras.math|jani.murillo|ramos{\_}tort]@ciencias.unam.mx}.}
\footnotetext[3]{Divisi{\' o}n de Matem{\' a}ticas e Ingenier{\' i}a at FES Acatl{\' a}n, Universidad Nacional Aut{\' o}noma de M{\' e}xico, 53150, State of Mexico, Mexico. E-mail address: {\tt christian.rubio@apolo.acatlan.unam.mx}.}

\begin{abstract} 
In this paper, we study the achromatic and the pseudoachromatic numbers of planar and outerplanar graphs as well as planar graphs of girth 4 and graphs embedded on a surface. We give asymptotically tight results and lower bounds for maximal embedded graphs.
\end{abstract}
\textbf{Keywords:} achromatic number, pseudoachromatic number, outerplanar graphs, thickness, outerthickness, girth-thickness, graphs embeddings.
%%%%%%%%%%%%%%%%%%%%%%%%%%%%%%%%%%%%%%%%%%%%%%%%%%%%%%%%%%%%%%%%%%%%%%%%%%%%%%%%%%%%%%%%%%%%%%%%%%%%%%%%%
\section{Introduction} \label{sec:Intro}
A \emph{$k$-coloring} of a finite and simple graph $G$ is a surjective function that assigns to each vertex of $G$ a color from a set of $k$ colors. A $k$-coloring of a graph $G$ is called \emph{proper} if any two adjacent vertices have different colors, and it is called \emph{complete} if every pair of colors appears on at least one pair of adjacent vertices. 

The \emph{chromatic number $\chi(G)$} of $G$ is the smallest number $k$ for which there exists a proper $k$-coloring of $G$. Note that every proper $\chi(G)$-coloring of a graph $G$ is a complete coloring, otherwise, if two colors $c_i$ and $c_j$ have no edge in common, recolor the vertices colored $c_i$ with $c_j$ obtaining a proper $(\chi(G)-1)$-coloring of $G$ which is a contradiction.

The \emph{achromatic number} $\psi(G)$ of $G$ is the largest number $k$ for which there exists a proper and complete $k$-coloring of $G$. The \emph{pseudoachromatic number} $\psi_s(G)$ of $G$ is the largest number $k$ for which there exists a complete, not necessarily proper, $k$-coloring of $G$, see \cite{MR532949}. From the definitions,
\begin{equation} \label{eq1}
\chi(G)\leq\psi(G)\leq\psi_s(G).
\end{equation}
The origin of achromatic numbers dates back to the 1960s, when Harary, et al. \cite{MR0272662} used them in the studied of homomorphism into the complete graph and Gupta \cite{MR0256930} proved bounds on the sum of the chromatic, achromatic and pseudoachromatic numbers of a graph and its complement. Observe that if a graph $G$ has a complete $k$-coloring, then $G$ must contain at least $\binom{k}{2}$ edges. Consequently, for a graph $G$ with $m$ edges and $k=\psi_s(G)$, $\binom{k}{2}\leq m$ and then (see \cite{MR2450569})
\begin{equation} \label{eq2}
\psi_s(G)\leq \left\lfloor \sqrt{2m+1/4}+1/2\right\rfloor.
\end{equation}
Some results about these parameters can be found in \cite{MR3461960,MR3249588,MR3774452,MR3695270,MR1224703}. 

A graph $G$ is \emph{planar} if $G$ can be drawn, or embedded, in the plane without any two of its edges crossing. Planar graphs and their colorings have been the main topic of intensive research since the beginnings of graph theory because of their connection to the well-known \emph{four color problem} \cite{MR2450569,MR0216979}.
Planar graphs are closely related to outerplanar graphs, namely, a graph $G$ is \emph{outerplanar} if there exists a planar embedding of $G$ so that every vertex of $G$ lies on the boundary of the exterior face of $G$. A planar graph $G$ of order $n\geq3$ has size at most $3(n-2)$, and if its order is $n\geq4$ and its girth is $4$, $G$ has size at most $2(n-2)$ (see \cite{MR2159259}), while an outerplanar graph has size at most $2n-3$. Hence, Equation \ref{eq2} gives the following upper bounds in terms of the number of vertices.
\begin{equation} \label{eq3}
\psi_s(G)\leq \left\lfloor \sqrt{6n - 47/4}+1/2\right\rfloor \textrm{ if }G\textrm{ is planar, }
\end{equation}
\begin{equation} \label{eq4}
\psi_s(G)\leq \left\lfloor \sqrt{4n - 23/4}+1/2\right\rfloor \textrm{ if }G\textrm{ is outerplanar and }
\end{equation}
\begin{equation} \label{eq5}
\psi_s(G)\leq \left\lfloor \sqrt{4n - 31/4}+1/2\right\rfloor \textrm{ if }G\textrm{ is planar of girth at least 4.}
\end{equation}

In this research, we study the relation between the achromatic and pseudoachromatic numbers and planar graphs, outerplanar graphs and planar graphs of girth at least 4, in order to do this, we use the concept of thickness, outerthickness and 4-girth-thickness, respectively. The \emph{thickness} $\theta(G)$ of a graph $G$ is defined as the least number of planar subgraphs whose union is $G$. This parameter was studied in complete graphs, firstly, by Beineke and Harary in \cite{MR0164339,MR0186573} (see also \cite{MR0155314,MR0159318}) and by Alekseev and Gon{\v{c}}akov in \cite{MR0460162}; these authors gave specific constructions of planar graphs to decompose the complete graph $K_n$ using specific matrices. The \emph{outerthickness} $\theta_o(G)$ of $G$ is defined similarly to $\theta(G)$ but with outerplanar instead of planar \cite{MR0347652,MR1100049}. Analogously, the \emph{$4$-girth-thickness} $\theta(4,G)$ of a graph $G$ also is defined similarly to $\theta(G)$ but using planar subgraphs of girth at least $4$ instead of planar subgraphs \cite{R}. Some previous results for complete colorings in planar and outerplanar graphs were obtained by Balogh, et al. in \cite{MR2424828} for the parameter called \emph{the Grundy number}.

This paper is organized as follows. In Section \ref{Planar case}, we use the thickness of the complete graph to prove that the upper bounds of Equation \ref{eq3} is tight for planar graphs. For the sake of completeness, we give a decomposition of $K_{6t+1}$, in a combinatorial way, in order to obtain its thickness. We also prove any maximal planar graph of $n$ vertices has pseudoachromatic number of order $\sqrt{n}$. In summary, we prove the following theorem.
\begin{theorem} \label{teo1}
i) For $n=6t^2+3t+1$ and $t\geq 1$, there exists a planar graph $G$ of order $n$ such that
\[\psi(G)=\psi_s(G)=\left\lfloor \sqrt{6n-47/4}+1/2\right\rfloor.\]
ii) For $n\geq10$, there exists a planar graph $H$ of order $n$ such that
\[\psi(H) \geq \left\lfloor \sqrt{6n-47/4}-9/2\right\rfloor.\]
iii) For some integer $n_0$ and any maximal planar graph $G$ of order $n\geq n_0$, \[0.81\sqrt{n}\leq\psi_s(G)\leq 2.45\sqrt{n}.\]
\end{theorem}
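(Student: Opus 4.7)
The plan is to handle the three parts in sequence. For part (i), I would first check by direct algebra that when $n=6t^2+3t+1$, Equation~\ref{eq3} gives $\lfloor \sqrt{6n-47/4}+1/2\rfloor = 6t+1$, so the task reduces to exhibiting a planar graph $G$ of order $n$ with a proper complete coloring using $6t+1$ colors. I would take as input the fact that $\theta(K_{6t+1}) = t+1$, and (as promised in the introduction) reprove it combinatorially by exhibiting an explicit decomposition $K_{6t+1}=P_0\cup P_1\cup\cdots\cup P_t$ into planar subgraphs. To build $G$, I would start with the disjoint union of the $t+1$ planar pieces (which has $(6t+1)(t+1) = 6t^2+7t+1$ vertices) and identify $4t$ pairs of vertices sharing a common label $v\in V(K_{6t+1})$, bringing the order down to $n$; the identifications are arranged so that the resulting graph remains planar, for instance as a tree-like sequence of vertex-sums among the pieces. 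Coloring every surviving copy of label $v$ with color $v$ yields a proper coloring, and completeness follows because every edge $\{u,v\}$ of $K_{6t+1}$ lies in some $P_i$ and hence realizes the pair $(u,v)$ in $G$.

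For part (ii), given arbitrary $n\geq 10$ I would extend the construction of (i). Let $t$ be chosen so that $n$ lies in the window $[6t^2+3t+1,\,6(t+1)^2+3(t+1)]$; then apply (i) either to $t$ and append the remaining $n-(6t^2+3t+1)$ vertices as pendants colored with pre-existing colors (preserving planarity and completeness), or to $t+1$ and delete the excess vertices while noting that at most one color is lost per deletion. Routine algebraic manipulation then converts the resulting lower bound on $\psi(H)$ into $\lfloor\sqrt{6n-47/4}-9/2\rfloor$; the offset $-9/2$ (a loss of five relative to the sharp value $+1/2$ in Equation~\ref{eq3}) absorbs the worst case when $n$ falls midway between consecutive values $6t^2+3t+1$.

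For part (iii), the upper bound $\psi_s(G)\leq 2.45\sqrt{n}$ is immediate from Equation~\ref{eq3} combined with $\sqrt{6}<2.45$, valid beyond a small threshold $n_0$. For the lower bound, I would show that any maximal planar graph $G$ of order $n\geq n_0$ admits a complete (not necessarily proper) coloring with $k=\lceil 0.81\sqrt{n}\rceil$ colors. Since $0.81\approx\sqrt{2/3}$, we have $\binom{k}{2}\approx n/3$, while $G$ has $3n-6$ edges --- about nine edges per color pair on average. This slack should suffice to guarantee a partition of $V(G)$ into $k$ classes in which every pair of classes is connected by an edge, and I would realize it either by a random equipartition together with a union-bound failure estimate, or by a deterministic construction exploiting the arboricity-$3$ structure of planar graphs to spread edges evenly across pairs of blocks.

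The hardest step is the lower bound in (iii), because the bound must hold uniformly over all maximal planar graphs, including highly unbalanced ones with unbounded maximum degree; the structural rigidity of triangulations must be used carefully to turn the average-edge-count slack into a guarantee for every pair of color classes. By contrast, part (i) reduces to a known thickness decomposition plus a planarity-preserving identification, and part (ii) is a routine extension of (i) combined with elementary algebra.
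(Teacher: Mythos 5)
Parts (i) and (ii) of your plan coincide with the paper's: the graph in (i) is built from the $t+1$ pieces of the thickness decomposition of $K_{6t+1}$, kept planar by identifying same-labelled vertices that can be placed on a common (exterior) face, and coloured by label; part (ii) is the graph from (i) padded with extra vertices, plus the observation that $\bigl\lfloor \sqrt{6n-47/4}+1/2\bigr\rfloor$ increases by at most $6$ across one window $[6t^2+3t+1,\,6(t+1)^2+3(t+1)+1)$. The upper bound in (iii) is likewise the same use of Equation \ref{eq3}.

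The genuine gap is the lower bound in (iii), which you yourself flag as the hardest step but do not actually prove. A random equipartition into $k\approx 0.81\sqrt{n}$ classes with a union bound does not go through: although a maximal planar graph has $3n-6$ edges, so that the \emph{average} number of edges between a pair of classes is about $9$, the edges of a triangulation can be heavily concentrated (e.g.\ a triangulation consisting of a path together with two dominating apex vertices). For two classes avoiding the high-degree vertices, only the $O(n)$ path edges can join them; the probability that none does is a constant bounded away from $0$, and multiplying by the $\binom{k}{2}=\Theta(n)$ pairs makes the union bound fail. The unspecified ``deterministic construction exploiting arboricity $3$'' is not a proof either. The paper avoids this entirely with a much more elementary device: by a theorem of Nishizeki and Baybars, every maximal planar graph of order $n$ (for $n$ large enough) contains a matching of size at least $\frac{n+4}{3}$; assigning the $\binom{k}{2}$ distinct colour pairs to distinct matching edges gives a complete (non-proper) colouring whenever $\binom{k}{2}\leq\frac{n+4}{3}$, i.e.\ for $k\sim\sqrt{2n/3}\geq 0.81\sqrt{n}$. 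If you want to keep your approach, you would need to replace the average-count heuristic with a structural argument; otherwise, the matching route is the one that actually closes part (iii).
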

While Theorem \ref{teo1} is about planar graphs, in Section \ref{Outerplanar case}, we prove the following theorem about outerplanar graphs.
\begin{theorem} \label{teo2}
i) For $n=4t^2+1$ and $t\geq 1$, there exists an outerplanar graph $G$ of order $n$ such that
\[\psi(G)=\psi_s(G)=\left\lfloor \sqrt{4n-23/4}+1/2\right\rfloor.\]
ii) For $n\geq5$, there exists an outerplanar graph $H$ of order $n$ such that
\[\psi(H) \geq  \left\lfloor \sqrt{4n-23/4}-5/2\right\rfloor.\]
iii) For some integer $n_0$ and any maximal outerplanar graph $G$ of order $n\geq n_0$, \[1.41\sqrt{n}\leq\psi_s(G)< 2\sqrt{n}+1/2.\]
\end{theorem}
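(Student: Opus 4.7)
\emph{Proof plan.} The upper bounds in (i) and (iii) come directly from Equation~(\ref{eq4}). A short calculation shows $\lfloor\sqrt{4n-23/4}+1/2\rfloor=4t$ at $n=4t^2+1$; and since a maximal outerplanar graph on $n$ vertices has exactly $2n-3$ edges, the same equation yields $\psi_s(G)\le\lfloor\sqrt{4n-23/4}+1/2\rfloor<2\sqrt{n}+1/2$. The substantive content lies in the matching lower bound constructions.

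For the lower bound in~(i), I would build an outerplanar graph $G$ of order $4t^2+1$ admitting a proper complete $4t$-colouring, adapting the planar strategy of Theorem~\ref{teo1} to the outerplanar setting. The idea is to partition $V(G)$ into $4t$ colour classes (one of size $t+1$, the remaining $4t-1$ of size $t$) and realize every one of the $\binom{4t}{2}$ inter-class pairs by an edge, all while keeping the graph outerplanar. Concretely, take $t$ outerplanar subgraphs of $K_{4t}$ on a common labelled vertex set $\{v_1,\dots,v_{4t}\}$ that together cover almost every edge of $K_{4t}$, draw these pieces on pairwise vertex-disjoint copies of the label set, and append one auxiliary vertex $u$ (bringing the order to $4t\cdot t+1=4t^2+1$) to supply the remaining pairs via a small outerplanar gadget incident to it. Colouring every copy of $v_i$ by $i$ then gives a proper coloring (each piece has distinct labels on adjacent vertices) that is complete by construction, so $\psi(G)\ge 4t$, matching the upper bound.

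For (ii), let $t$ be the largest integer with $4t^2+1\le n$ and augment the graph from~(i) by $n-(4t^2+1)$ pendants coloured using already-used colours. Pendants preserve outerplanarity, the coloring stays proper and complete, and $\psi(H)\ge 4t$. A direct arithmetic check (square both sides) shows $4t\ge\lfloor\sqrt{4n-23/4}-5/2\rfloor$ whenever $n\le 4t^2+7t+4$. For the remaining $n\in[4t^2+7t+5,\,4(t+1)^2]$ the required bound rises to $4t+1$; here I would instead attach $4t$ pendants---one to a vertex of each existing colour---all sharing a new colour $4t+1$, then fill with further pendants as needed. Since $n\ge 4t^2+4t+1$ throughout this range, enough vertices are available, and the new colour class meets all previous ones through the pendant edges, giving $\psi(H)\ge 4t+1$.

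For the lower bound in~(iii), any maximal outerplanar graph on $n\ge 3$ vertices is $2$-connected and its outer face is a Hamiltonian cycle $C_n$. Because $\psi_s$ is monotone under edge addition (a complete coloring of a subgraph remains complete in the supergraph), $\psi_s(G)\ge\psi_s(C_n)$, and classical results give $\psi_s(C_n)\sim\sqrt{2n}$ as $n\to\infty$; since $\sqrt{2}>1.41$, there is $n_0$ with $\psi_s(G)\ge 1.41\sqrt n$ for all $n\ge n_0$. The main technical obstacle is the construction in~(i): since $\theta_o(K_{4t})=t+1$, a $t$-piece outerplanar decomposition of $K_{4t}$ does not quite exist, so one must design a near-decomposition whose residual $O(t)$ edges can be realized by edges incident to the single auxiliary vertex $u$ without destroying outerplanarity; the bookkeeping in~(ii) at the upper tail of each interval $[4t^2+1,\,4(t+1)^2]$ is the secondary delicate point.
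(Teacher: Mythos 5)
Your parts (ii) and (iii) are sound. The lower bound in (iii) via Hamiltonicity of maximal outerplanar graphs and $\psi_s(C_n)\sim\sqrt{2n}$ is exactly the paper's argument, and the upper bounds from Equation (\ref{eq4}) are routine. In (ii) you are in fact \emph{more} careful than the paper: the paper deduces $\psi(H)\geq\lfloor\sqrt{4n-23/4}+1/2\rfloor-3$ from a strict inequality between floors that actually becomes an equality at the top of each interval (e.g.\ at $n=4(t+1)^2$ both floors equal $4(t+1)$), whereas your case split at $n=4t^2+7t+4$ and the pendant gadget creating a genuine $(4t+1)$-st color class correctly covers that boundary range.

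The genuine gap is in (i), which is the heart of the theorem. You correctly identify that $\theta_o(K_{4t})=t+1$, so $t$ outerplanar pieces cannot cover all of $K_{4t}$, but your proposed repair --- realizing the residual pairs ``via a small outerplanar gadget incident to'' a single auxiliary vertex $u$ --- cannot work. In the Guy--Nowakowski decomposition the $(t+1)$-st piece is a perfect matching $\{u_ju_{j+2t}: j=1,\dots,t\}$ on $2t$ \emph{distinct} labels; a single new vertex carries a single color, so edges incident to it can only witness color pairs involving that one color, never a matching of $t$ pairwise disjoint color pairs. The paper's actual mechanism is different: it draws the $t$ maximal outerplanar pieces $G_1,\dots,G_t$ (each a spanning outerplanar subgraph of $K_{4t}$, all vertices on the outer face) side by side, and then realizes each residual matching edge $u_ju_{j+2t}$ by an edge in the common exterior region joining the vertex colored $u_j$ in the copy of $G_j$ to the vertex colored $u_{j+2t}$ in the copy of $G_{j+1}$; only the last matching edge lacks a second endpoint to borrow, which is exactly the one extra vertex giving order $4t\cdot t+1=4t^2+1$. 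So the residual edges are absorbed by identification with already-placed colored vertices, not routed through a hub. Without this (or an equivalent explicit construction, including an argument that the result is still outerplanar and that every edge of $K_{4t}$ is covered exactly where needed), the matching lower bound $\psi(G)\geq 4t$ in (i) is not established.
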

We remark that in \cite{hara2002achromatic}, the authors prove a related result to \emph{iii)} of Theorem \ref{teo2}, namely, $\psi(G)\geq \sqrt{n}-1$ for any maximal outerplanar graph of order $n\geq 4$ and $\psi(G)\geq 1.41\sqrt{n}-2$ for any maximal outerplanar graph of order $n\geq 4$ and with two vertices of degree 2.

Next, in Section \ref{4 girth case}, we prove the following theorem about planar graphs of girth at least 4.
\begin{theorem} \label{teo3}
i) For $n=4t^2+2$ and $t\geq 1$, there exists a maximal planar graph $G$ of order $n$ and girth at least $4$ such that
\[\psi(G)=\psi_s(G)=\left\lfloor\sqrt{4n-31/4}+1/2\right\rfloor.\]
ii) For $n\geq6$, there exists a planar graph $H$ of order $n$ and girth at least $4$ such that
\[\psi(H) \geq  \left\lfloor \sqrt{4n-31/4}-5/2\right\rfloor.\]
iii) For any $n\geq 4$, there exists a maximal planar graph $G$ of order $n$ with girth $4$ such that, \[\psi(G)=2\qquad and \qquad \psi_s(G)=3.\]
\end{theorem}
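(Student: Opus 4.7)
I would prove the three parts of Theorem~\ref{teo3} in sequence, invoking a $4$-girth-thickness decomposition of $K_{4t}$ (from~\cite{R}) for parts~(i)--(ii), analogously to the thickness-based constructions of Theorems~\ref{teo1} and \ref{teo2}, and an explicit family for part~(iii).

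For part~(iii), I take $G=K_{2,n-2}$ for every $n\geq 4$. The graph is planar (one partition class has two vertices), bipartite (so girth $4$), and has $2(n-2)$ edges, matching the girth-$4$ planar maximum; any new edge would create a triangle through the two hubs or destroy planarity, so $G$ is maximal. If $\psi(G)\geq 3$, the two hubs $a,b$ must receive distinct colours $c_1,c_2$ (if they share a colour, properness forbids a third), but then every other vertex gets $c_3$ and the pair $\{c_1,c_2\}$ is unrealised since $a,b$ are non-adjacent; hence $\psi(G)=2$. The same reasoning extended to four colours gives $\psi_s(G)\leq 3$: with $a,b$ sharing a colour only three pairs through it are realisable, while with $a,b$ distinctly coloured $c_1,c_2$ the pair $\{c_3,c_4\}$ would require an edge inside the independent leaf side. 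The colouring $a\mapsto c_1,\, b\mapsto c_2,\, v_1\mapsto c_3$ together with some $v_i\mapsto c_1$ (providing the pair $\{c_1,c_2\}$ via the edge $bv_i$) realises all three pairs, so $\psi_s(G)=3$.

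For part~(i), I would use the $4$-girth-thickness decomposition of $K_{4t}$ from~\cite{R} into planar girth-$\geq 4$ layers on the common label set $\{c_1,\ldots,c_{4t}\}$ and blow up each label $c_j$ into an independent colour class $V_j$ with $|V_j|\in\{t,t+1\}$ so that $\sum|V_j|=4t^2+2$; concretely two classes receive $t+1$ vertices and the remaining $4t-2$ classes receive $t$ vertices, since $(4t-2)t+2(t+1)=4t^2+2$. The layers are assembled into one planar graph of girth $\geq 4$ by sharing blown-up vertices across layers as needed (each vertex serving as the representative of its label in one or more layers), arranged so that the result is simultaneously planar and the classes $V_j$ are independent. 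The colouring $v\mapsto c_j$ (for $v\in V_j$) is then proper (classes are independent) and complete (every pair $\{c_j,c_\ell\}$ is an edge of some layer). Since $4n-31/4=16t^2+1/4$ gives $\lfloor\sqrt{4n-31/4}+1/2\rfloor=4t$, Eq.~\ref{eq5} forces $\psi(G)=\psi_s(G)=4t$.

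For part~(ii), given $n\geq 6$, let $t$ be the largest integer with $4t^2+2\leq n$. Start from the part~(i) extremal graph on $4t^2+2$ vertices and attach the remaining $n-4t^2-2<8t+4$ vertices as pendants or degree-two subdivisions of existing edges, preserving planarity, girth $\geq 4$, and a $4t$-colouring (each new vertex reuses an existing colour class without creating a monochromatic edge). For values of $n$ near $4(t+1)^2+1$, where the target $\lfloor\sqrt{4n-31/4}-5/2\rfloor$ can exceed $4t$ by one, I would switch to the $(t+1)$-construction of part~(i) with a small number of vertices deleted carefully (chosen so that enough colour classes remain non-empty and the graph still realises at least $4t+1$ colour pairs). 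Elementary arithmetic over the range $n\in[4t^2+2,4(t+1)^2+1]$ then yields $\psi(H)\geq\lfloor\sqrt{4n-31/4}-5/2\rfloor$, with the slack of $5/2$ comfortably absorbing the transitions between consecutive values of $t$.

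The hard part is the construction in part~(i): producing a single maximal planar graph of girth $\geq 4$ on exactly $4t^2+2$ vertices whose blown-up colour classes realise the $4$-girth-thickness decomposition of $K_{4t}$ in a simultaneously planar way. Since $\theta(4,K_{4t})$ is only slightly larger than $t$, a naive disjoint union of layers would use too many vertices, so the layers must share blown-up vertices carefully, and showing that a compatible simultaneous planar embedding exists (and is maximal) will likely require an explicit combinatorial construction rather than a purely existential argument.
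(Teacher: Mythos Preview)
Your overall strategy matches the paper's: part~(iii) via $K_{2,n-2}$, part~(i) via the $4$-girth-thickness decomposition of $K_{4t}$ from \cite{R} glued into a single planar graph, and part~(ii) by padding. The serious gap is that you never carry out the construction in part~(i); you describe a target (classes $V_j$ of prescribed sizes, layers ``sharing blown-up vertices'') but give no mechanism for producing a simultaneous planar embedding of girth at least $4$. The paper's proof is entirely explicit here: the decomposition consists of $t$ planar girth-$4$ graphs $G_1,\dots,G_t$ (each of order $4t$ and size $8t-4$) together with a perfect matching $G_{t+1}$; each $G_i$ has a drawing with $u_i,u_{i+1},v_i,v_{i+1}$ on the outer face. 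One then inserts $G_t$ into a specific inner $4$-face of $G_{t-1}$, identifying the two vertices coloured $u_t,v_t$, and iterates, so that at each step exactly two coloured vertices are merged; finally each matching edge $u_jv_j$ is hung in a face incident to an existing $u_j$. This recursive face-insertion is what guarantees planarity and girth~$\ge 4$ simultaneously, and it yields $4t+(t-1)(4t-2)+2t=4t^2+2$ vertices on the nose. Your asserted class sizes (exactly two classes of size $t+1$, the rest of size $t$) are pure arithmetic and do not correspond to this or any construction you supply; in the paper's graph the class sizes are in fact spread over $\{t-1,t,t+1\}$.

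Two smaller remarks. First, in part~(ii) the paper simply adds isolated vertices to the part~(i) graph and uses $\psi(H)=4t$ across the whole interval $4t^2+2\le n<4(t+1)^2+2$; your instinct that something delicate happens at $n=4(t+1)^2+1$ is well founded, since there $\lfloor\sqrt{4n-31/4}-5/2\rfloor=4t+1$, so the padding argument alone does not reach the stated bound and one genuinely needs the kind of ``delete a vertex from the $(t+1)$-construction'' patch you allude to---but you would have to make that patch precise (choose a vertex whose incident edges are never the unique witnesses of a colour pair). Second, note that the paper's glued graph has exactly $\binom{4t}{2}=8t^2-2t$ edges, which is $2t$ short of the girth-$4$ planar maximum $2n-4=8t^2$; if you want the word \emph{maximal} in the statement of part~(i) to be literal, you must still add $2t$ edges without creating triangles or destroying planarity, a step neither you nor the paper's proof spells out.
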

In Section \ref{Other surfaces}, we give results for graphs embedded on an surface $S$ in a similar way. Finally, in Section \ref{The Platonic graphs}, we give the achromatic numbers of the Platonic graphs.
%%%%%%%%%%%%%%%%%%%%%%%%%%%%%%%%%%%%%%%%%%%%%%%%%%%%%%%%%%%%%%%%%%%%%%%%%%%%%%%%%%%%%%%%%%%%%%%%%%%%%%%%%
\section{Planar graphs and the thickness of $K_{6t+1}$} \label{Planar case}
In this section, we exhibit a planar graph with the property that its achromatic numbers attain the upper bound of Equation \ref{eq3}. To achieve it, we construct an almost maximal planar graph and color its $n$ vertices using essentially $\sqrt{6n}$ colors in a proper way such that there is exactly an edge between each pair of colors. Next, we give a general lower bound for the pseudoachromatic number of maximal planar graphs.

To begin with, we show that the thickness of $K_{6t}$ and $K_{6t+1}$ is $t+1$, that is, both graphs are the union of $t+1$ planar subgraphs. Such a decomposition originally appears in \cite{MR0164339}, for a further explanation see \cite{MR0460162,MR0186573,MR2626173}. In the following subsection, we give a pure combinatorial approach to that decomposition.
%%%%%%%%%%%%%%%%%%%%%%%%%%%%%%%%%%%%%%%%%%%%%%%%%%%%%%%%%%%%%%%%%%%%%%%%%%%%%%%%%%%%%%%%%%%%%%%%%%%%%%%%%
\subsection{The planar decomposition of $K_{6t+1}$} \label{Planar decomposition}
A well-known result about complete graphs of even order $2t$ is that these graphs are decomposable into a cyclic factorization of Hamiltonian paths, see \cite{MR2450569}. In the remainder of this section, all sums are taken modulo $2t$.

Let $G^x$ be a complete graph of order $2t$. Label its vertex set $V(G^x)$ as $\{x_1,x_2,\dots,x_{2t}\}$.
Let $\mathcal{F}^x_i$ be the Hamiltonian path with edges \[x_{i}x_{i+1},x_{i+1}x_{i-1},x_{i-1}x_{i+2},x_{i+2}x_{i-2},\dots,x_{i+t+1}x_{i+t},\] where $i\in\{1,2,\dots,t\}$. Such factorization of $G^x$ is the partition $\{E(\mathcal{F}^x_1),E(\mathcal{F}^x_2),\dots,E(\mathcal{F}^x_t)\}$. Note that the center of $\mathcal{F}^x_i$ has the edge $e^x_{i+\left\lceil \frac{t}{2}\right\rceil}=x_{i+\left\lceil \frac{t}{2}\right\rceil}x_{i+\left\lceil \frac{t}{2}\right\rceil+t}$, see Figure \ref{Fig1}.
\begin{figure}[!htbp]
\begin{center}
\includegraphics{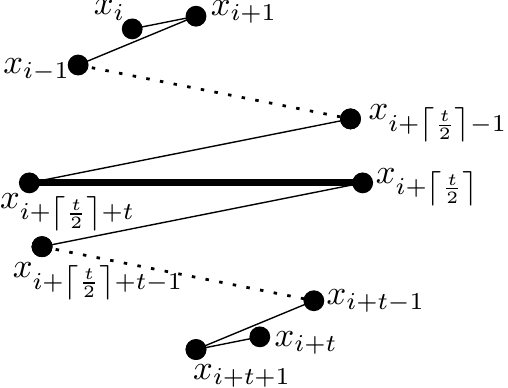}
\caption{The Hamiltonian path $\mathcal{F}^x_i$ with the edge $e^x_{i+\left\lceil \frac{t}{2}\right\rceil}$ in bold.}\label{Fig1}
\end{center}
\end{figure}

Consider the complete subgraphs $G^u$, $G^v$ and $G^w$ of $K_{6t}$ such that each of them has $2t$ vertices, $G^v$ is a subgraph of $K_{6t}\setminus V(G^u)$ and $G^w$ is $K_{6t}\setminus (V(G^u)\cup V(G^v))$. Label their vertex sets $V(G^u)$, $V(G^v)$ and $V(G^w)$ as $\{u_1,u_2,\dots,u_{2t}\}$, $\{v_1,v_2,\dots,v_{2t}\}$ and $\{w_1,w_2,\dots,w_{2t}\}$, respectively. 

Next, for any symbol $x$ of $\{u,v,w\}$, we consider the cyclic factorization $\{E(\mathcal{F}^x_1),E(\mathcal{F}^x_2),\dots,E(\mathcal{F}^x_t)\}$ of $G^x$ into Hamiltonian paths and we denote as $P_{x_i}$ and $P_{x_{i+t}}$ the $t$-subpaths of $\mathcal{F}^x_i$ containing the leaves $x_i$ and $x_{i+t}$, respectively.

Now, we construct the maximal planar subgraphs $G_1$, $G_2$,...,$G_{t}$ and a matching $G_{t+1}$, each of them of order $6t$, as follows. 

Let $G_{t+1}$ be a perfect matching with edges $u_ju_{j+t}$, $v_jv_{j+t}$ and $w_jw_{j+t}$ for $j\in\{1,2,\dots,t\}$. 

For $i\in\{1,2,\ldots,t\}$, let $G_i$ be the spanning planar graph of $K_{6t}$ whose adjacencies are given as follow:
we take the 6 paths, $P_{u_i},P_{u_{i+t}},P_{v_i},P_{v_{i+t}},P_{w_i}$ and $P_{w_{i+t}}$ and insert them in the octahedron with vertices $u_{i},u_{i+t},v_{i},v_{i+t},w_{i}$ and $w_{i+t}$ as is shown in Figure \ref{Fig2} (Left). The vertex $x$ of each path $P_x$ is identified with the vertex $x$ in the corresponding triangle face and join all the other vertices of the path with both of the other vertices of the triangle face, see Figure \ref{Fig2} (Right).
\begin{figure}[!htbp]
\begin{center}
\includegraphics{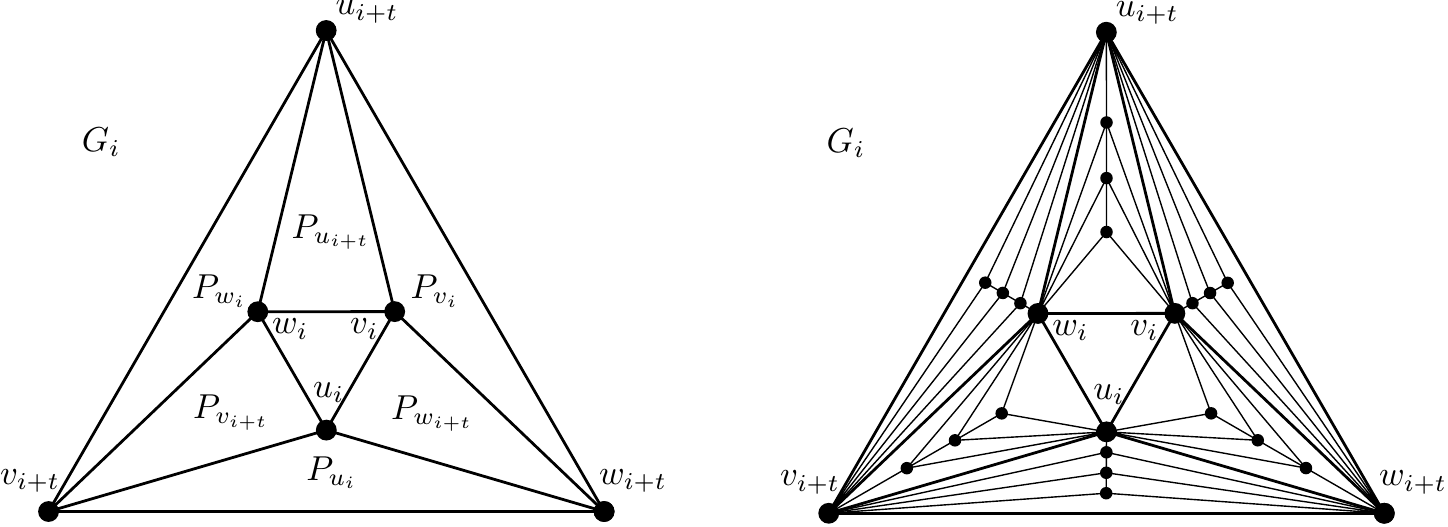}
\caption{(Left) The octahedron diagram of the graph $G_{i}$. (Right) The graph $G_{i}$.}\label{Fig2}
\end{center}
\end{figure}

Given the construction of $G_i$, $K_{6t}=\overset{t+1}{\underset{i=1}{\bigcup}}G_{i}$, see \cite{MR0460162,MR0164339} for details%, where each vertex labelled $x'_i$ is denoted here by $x_{i+t}$
. Therefore, the resulting $t+1$ planar subgraphs show that $\theta(K_{6t})\leq t+1$. Hence, $\theta(K_{6t})= t+1$ due to the fact that $\theta(K_{6t})\geq\left\lceil \frac{\binom{6t}{2}}{3(6t-2)}\right\rceil =t+1.$

The case of $K_{6t+1}$ is based on this decomposition since $K_{6t+1}$ is the join of $K_{6t}$ and a new vertex $z$. Hence, take the set of subgraphs $\{G_1,G_2,\dots,G^*_{t+1}\}$, where $G^*_{t+1}$ is the join between the vertex $z$ and the matching $G_{t+1}$ (see Figure \ref{Fig3} for a drawing of $G^*_{t+1}$), to obtain a planar decomposition of $K_{6t+1}$ using $t+1$ elements, then $\theta(K_{6t+1})\leq t+1$. Therefore $\theta(K_{6t+1})= t+1$ due to the fact that $\theta(K_{6t+1})\geq\left\lceil \frac{\binom{6t+1}{2}}{3(6t-1)}\right\rceil =t+1$.
%%%%%%%%%%%%%%%%%%%%%%%%%%%%%%%%%%%%%%%%%%%%%%%%%%%%%%%%%%%%%%%%%%%%%%%%%%%%%%%%%%%%%%%%%%%%%%%%%%%%%%%%%
\subsection{On Theorem \ref{teo1}}\label{On Theorem 1}
In this subsection, we show the existence of a planar graph $G$ of order $n=6t^2+3t+1$ and color its vertices in a proper and complete way using $k=\lfloor 1/2+\sqrt{6n-47/4}\rfloor$ colors in order to prove Theorem \ref{teo1} i). Next, we prove Theorem \ref{teo1} ii), i.e., the upper bound given in Equation \ref{eq3} for planar graphs is asymptotically the best possible. Finally, we prove Theorem \ref{teo1} iii), the natural question about a lower bound for the pseudoachromatic number of maximal planar graphs.
\begin{proof}[Proof of Theorem \ref{teo1}]
i)
Consider the set of planar subgraphs $\{G_1,G_2,\dots,G^*_{t+1}\}$ of $K_{6t+1}$ described in Subsection \ref{Planar decomposition} but now, color each vertex with their corresponding label. Take a planar drawing of $G_i$ such that its colored vertices $u_{i+t}$, $v_{i+t}$ and $w_{i+t}$ are the vertices of the exterior face, for $i\in\{1,2,\ldots,t\}$. Next, take a planar drawing of $G^*_{t+1}$ (lying in the exterior face) identifying its colored vertices $u_{i+t}$, $v_{i+t}$ and $w_{i+t}$ with the colored vertices $u_{i+t}$, $v_{i+t}$ and $w_{i+t}$ of the exterior face, see Figure \ref{Fig3}. The resulting planar graph $G$ has a proper and complete coloring with $6t+1$ colors and order $n=t(6t)+(6t)/2+1=6t^2+3t+1$.

Indeed, the coloring is proper and complete due to fact that every pair of different colors $x_j$ and $x_{j'}$ are the labels of the complete graph $K_{6t+1}$, the edge $x_jx_{j'}$ is an edge in some subplanar graph $G_1$, $G_2$,...,$G_t$, $G^*_{t+1}$ exactly once and then, there exists an edge of $G$ with colors $x_j$ and $x_{j'}$. Therefore $6t+1\leq \psi(G)$.  We call this colored graph $G$ as \emph{the optimal colored planar graph of $n=6t^2+3t+1$ vertices}.
\begin{figure}[!htbp]
\begin{center}
\includegraphics{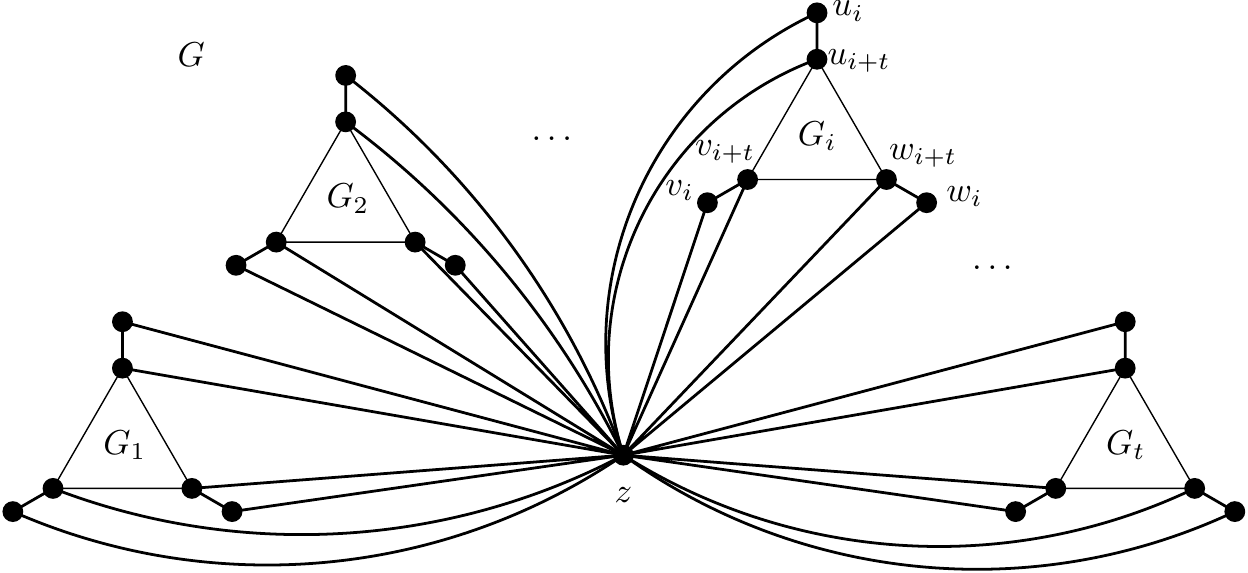}
\caption{The planar graph $G$ with the subgraph $G^*_{t+1}$ in bold.}\label{Fig3}
\end{center}
\end{figure}

On the other hand, for $n=6t^2+3t+1$, $\left\lfloor\sqrt{6n-47/4}+1/2\right \rfloor$ is equal to \[\left\lfloor\sqrt{36t^2+18t-23/4}+1/2\right \rfloor.\]
Since $\sqrt{36t^2+18t-23/4}+1/2<6t+2$ because $36t^2+18t-23/4\leq(6t+3/2)^2=36t^2+18t+9/4$ and $6t+1\leq \sqrt{36t^2+18t-23/4}+1/2$ because $36t^2+6t+1/4=(6t+1/2)^2\leq36t^2+18t-23/4,$ it follows that \[\left\lfloor\sqrt{36t^2+18t-23/4}+1/2\right \rfloor=6t+1.\]

Finally, we have \[\left\lfloor \sqrt{6n-47/4}+1/2\right\rfloor\leq\psi(G)\leq\psi_s(G)\leq\left\lfloor \sqrt{6n-47/4}+1/2\right\rfloor\]and the result follows.

ii)
Assume that $6t^2+3t+1\leq n<N=6(t+1)^2+3(t+1)+1$ for some natural number $t$. Let $G$ be the planar graph of order $6t^2+3t+1$ constructed in the proof of Theorem \ref{teo1} i); and let $H$ be the graph of order $n$ obtained by adding $n-(6t^2+3t+1)$ vertices to $G$. It is clearly that $H$ is a planar graph of order $n$ with achromatic number $6t+1$. Since $n<N$, we have that \[\left\lfloor\sqrt{6n-47/4}+1/2\right \rfloor<\left\lfloor\sqrt{6N-47/4}+1/2\right \rfloor=6(t+1)+1=\psi(H)+6.\]
Therefore \[\psi(H) \geq \left\lfloor\sqrt{6n-47/4}+1/2\right \rfloor-5 \]
and the result follows.

iii)
If a maximal planar graph $G$ of order $n$ has a Hamiltonian cycle, for instance if $G$ is $4$-connected \cite{MR0081471}, then its pseudoachromatic number $\psi_s(G)$ is at least $\psi_s(C_n)$ which is equal to $\max\{k:k \left\lfloor k/2 \right \rfloor \leq n\}\sim \sqrt{2n}$, see \cite{MR2450569,MR0441778,MR1811221}. In consequence, for some integer $n_0$, if $G$ is a Hamiltonian maximal planar graph of order $n\geq n_0$ then \[1.41\sqrt{n}\leq\psi_s(G).\]
On the other hand, since every maximal planar graph $G$ contains a matching of size at least $\frac{n+4}{3}$ \cite{MR548625} then its pseudoachromatic number is at least $\max\{k:\binom{k}{2} \leq \frac{n+4}{3}\}\sim \sqrt{2n/3}$. Therefore, for some integer $n_0$, if $G$ is a maximal planar graph of order $n\geq n_0$ then \[0.81\sqrt{n}\leq\psi_s(G)\]
and the result follows.
\end{proof}
%%%%%%%%%%%%%%%%%%%%%%%%%%%%%%%%%%%%%%%%%%%%%%%%%%%%%%%%%%%%%%%%%%%%%%%%%%%%%%%%%%%%%%%%%%%%%%%%%%%%%%%%%
\section{Outerplanar graphs and the outerthickness of $K_{4t}$} \label{Outerplanar case}

In this section, we prove Theorem \ref{teo2} and we exhibit a decomposition of $K_{4t}$ into $t+1$ outerplanar subgraphs, with a slightly different labelling than the original decomposition published in \cite{MR1100049}.
 The proof of this theorem follows the same technique as previous one.%%%%%%%%%%%%%%%%%%%%%%%%%%%%%%%%%%%%%%%%%%%%%%%%%%%%%%%%%%%%%%%%%%%%%%%%%%%%%%%%%%%%%%%%%%%%%%%%%%%%%%%%%
\subsection{The outerplanar decomposition of $K_{4t}$} \label{Outerplanar decomposition}
In order to show the outerplanar decomposition of $K_{4t}$, we use the labelling called \emph{boustrophedon} of a path $P_{u_{i+t}}$ for $i\in\{1,2,\dots,t\}$, see Figure \ref{Fig4} (Left) and \cite{MR1100049}. In the remainder of this section, all sums are taken modulo $4t$.

Label the vertex set $V(K_{4t})$ as $\{u_1,u_2,\dots,u_{4t}\}$. First construct the path $P_{u_{i+t}}$ and the vertex $u_{i+4t}$ is joined to each of the $r$ vertices of $P_{u_{i+t}}$. Then, take three copies of the obtained graph, with the labels increased by $t$, $2t$ and $3t$. On identifying vertices having the same label $i+t$, $i+2t$, $i+3t$ and $i+4t$ and adding the edge $u_{i+t}u_{i+3t}$ to obtain maximal outerplanar subgraphs $G_i$, each of them of order $4t$ for $i\in\{1,2,\dots,t\}$, and a matching $G_{t+1}$ of order $2t$ with the edges $u_{i}u_{i+2t}$ for $i\in\{1,2,\dots,t\}$.
\begin{figure}[!htbp]
\begin{center}
\includegraphics{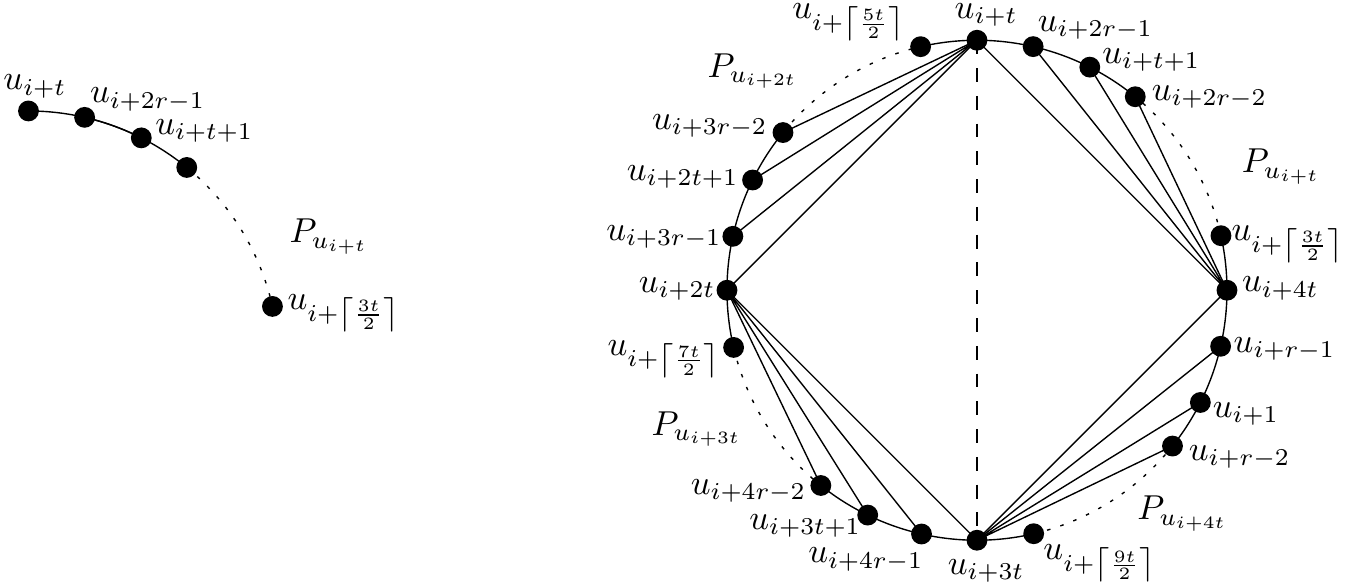}
\caption{(Left) The boustrophedon labelling. (Right) The graph $G_{i}$.}\label{Fig4}
\end{center}
\end{figure}

The resulting $t+1$ outerplanar subgraphs show that $\theta_o(K_{4t})\leq t+1$. Hence $\theta_o(K_{4t})= t+1$ owing to the fact that $\theta_o(K_{4t})\geq\left\lceil \frac{\binom{4t}{2}}{2(4t)-3}\right\rceil =t+1$.
%%%%%%%%%%%%%%%%%%%%%%%%%%%%%%%%%%%%%%%%%%%%%%%%%%%%%%%%%%%%%%%%%%%%%%%%%%%%%%%%%%%%%%%%%%%%%%%%%%%%%%%%%
\subsection{On Theorem \ref{teo2}}\label{On Theorem 2}
The proof of Theorem \ref{teo2} is given in this subsection. To prove i), we exhibit an outerplanar graph $G$ of order $n=4t^2+1$ and color its vertices in a proper and complete way using $k=\lfloor 1/2+\sqrt{4n-23/4}\rfloor$ colors. To prove ii), we show that for an arbitrary $n$, the upper bound of Equation \ref{eq4} is sharp for a given outerplanar graph of order $n$. To prove iii), we show a lower bound for the pseudoachromatic number of maximal outerplanar graphs.
\begin{proof}[Proof of Theorem \ref{teo2}]
i)
Consider the set of planar subgraphs $\{G_1,G_2,\dots,G_{t+1}\}$ of $K_{4t}$ described in Subsection \ref{Outerplanar decomposition} and color each vertex with the corresponding label of the decomposition. For $i\in\{1,2,\ldots,t\}$, take an outerplanar drawing of $G_i$ such that its vertices are in the exterior face. Next, take a planar drawing of $G_{t+1}$ (lying in the exterior face) identifying its colored vertices $u_{i}$ with the colored vertices $u_{i}$ of $G_{i}$, for $i\in\{1,2,\ldots,t\}$, and identifying its colored vertices $u_{i+2t}$ with the colored vertices $u_{i+2t}$ of $G_{i+1}$, for $i\in\{1,2,\ldots,t-1\}$, see Figure \ref{Fig5}. The resulting planar graph $G$ has a proper and complete coloring with $4t$ colors and order $n=t(4t)+1=4t^2+1$. 

Indeed, the coloring is proper and complete because every pair of different colors $x_j$ and $x_{j'}$ are the labels of the complete graph $K_{4t}$, the edge $x_jx_{j'}$ is an edge in some subplanar graph $G_1$, $G_2$,...,$G_t$, $G_{t+1}$ exactly once and then, there exists an edge of $G$ with colors $x_j$ and $x_{j'}$. Therefore $4t\leq \psi(G)$.
\begin{figure}[!htbp]
\begin{center}
\includegraphics{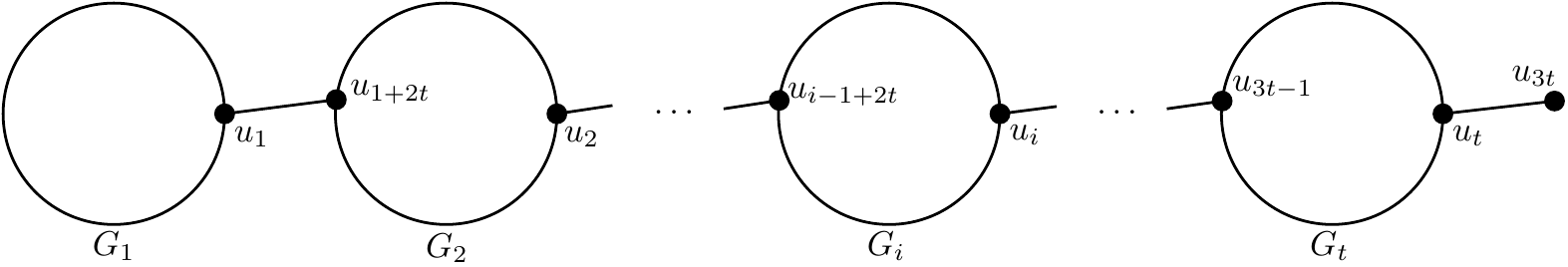}
\caption{The outerplanar graph $G$.}\label{Fig5}
\end{center}
\end{figure}

On the other hand, for $n=4t^2+1$, $\left\lfloor\sqrt{4n-23/4}+1/2\right \rfloor$ is equal to $\left\lfloor\sqrt{16t^2-7/4}+1/2\right \rfloor.$
Since $\sqrt{16t^2-7/4}+1/2<4t+1$ because $16t^2-7/4\leq(4t+1/2)^2=16t^2+4t+1/4$ and $4t\leq \sqrt{16t^2-7/4}+1/2$ because $16t^2-8t+1/4=(4t-1/2)^2\leq 16t^2-7/4,$ it follows that \[\left\lfloor\sqrt{16t^2-7/4}+1/2\right \rfloor=4t.\]

Finally, we have \[\left\lfloor \sqrt{4n-23/4}+1/2\right\rfloor\leq\psi(G)\leq\psi_s(G)\leq\left\lfloor \sqrt{4n-23/4}+1/2\right\rfloor\]and the result follows.

ii)
Let $n$ and $t$ be natural numbers such that $4t^2+1\leq n<N=4(t+1)^2+1$, and let $G$ be the outerplanar graph of order $4t^2+1$ constructed in the proof of Theorem \ref{teo2} i). Let $H$ be the graph of order $n$ obtained by adding $n-(4t^2+1)$ vertices to the exterior face of $G$. Therefore $H$ is an outerplanar graph of order $n$ with achromatic number $4t$. Now, since $n<N$ then \[\left\lfloor\sqrt{4n-23/4}+1/2\right \rfloor<\left\lfloor\sqrt{4N-23/4}+1/2\right \rfloor=4(t+1)=\psi(H)+4.\]
Therefore \[\psi(H) \geq \left\lfloor\sqrt{4n-23/4}+1/2\right \rfloor-3\]
and the result follows.

iii)
Since a maximal outerplanar graph $G$ of order $n$ is Hamiltonian, its pseudoachromatic number $\psi_s(G)$ is at least $\psi_s(C_n)=\max\{k:k \left\lfloor k/2 \right \rfloor \leq n\}\sim \sqrt{2n}$, see \cite{MR2450569,MR0441778,MR1811221}. In conclusion, for some integer $n_0$, if $G$ is a maximal outerplanar graph of order $n\geq n_0$ then \[1.41\sqrt{n}\leq\psi_s(G)\]
and the result follows.
\end{proof}
%%%%%%%%%%%%%%%%%%%%%%%%%%%%%%%%%%%%%%%%%%%%%%%%%%%%%%%%%%%%%%%%%%%%%%%%%%%%%%%%%%%%%%%%%%%%%%%%%%%%%%%%%
\section{Planar graphs of girth $4$ and the $4$-girth-thickness of $K_{4t}$} \label{4 girth case}
In this section, we prove Theorem \ref{teo3} and we exhibit a planar graph of girth $4$ with achromatic number the highest possible, after that, we use such graph to prove that the rather upper bound of Equation \ref{eq5} is asymptotically correct, and to end, we show that the triangle-free condition is enough to obtain $n$-graphs with constant achromatic numbers for arbitrary $n$. Firstly, we show that the 4-girth-thickness of $K_{4t}$ is $t+1$, see \cite{R}.
%%%%%%%%%%%%%%%%%%%%%%%%%%%%%%%%%%%%%%%%%%%%%%%%%%%%%%%%%%%%%%%%%%%%%%%%%%%%%%%%%%%%%%%%%%%%%%%%%%%%%%%%%
\subsection{The $4$-girth-planar decomposition of $K_{4t}$} \label{4-girth-planar decomposition}
In order to show the $4$-girth-planar decomposition of $K_{4t}$, we also use the cyclic factorization into Hamiltonian paths of $K_{2t}$ presented in Section \ref{Planar decomposition}. In the remainder of this section, all sums are taken modulo $2t$.

Let $G^u$ and $G^v$ be the complete subgraphs of $K_{4t}$ isomorphic to $K_{2t}$ such that $G^v$ is the subgraph $K_{4t}\setminus V(G^u)$. Label their vertex sets $V(G^u)$ and $V(G^v)$ as $\{u_1,u_2,\dots,u_{2t}\}$ and $\{v_1,v_2,\dots,v_{2t}\}$, respectively.

For any symbol $x$ of $\{u,v\}$, we consider the cyclic factorizations $\{E(\mathcal{F}^x_1),E(\mathcal{F}^x_2),\dots,E(\mathcal{F}^x_t)\}$. 

Then we construct the planar subgraphs $G_1$, $G_2$,...,$G_t$ of girth $4$, order $4t$ and size $8t-4$ (observe that $2(4t-2)=8t-4$), and also the perfect matching $G_{t+1}$ of order $4t$, as follows. Let $G_i$ be a spanning subgraph of $K_{4t}$ with edges $E(\mathcal{F}^u_i)\cup E(\mathcal{F}^v_i)$ and \[u_{i}v_{i+1},v_{i}u_{i+1},u_{i+1}v_{i-1},v_{i+1}u_{+i-1},u_{i-1}v_{i+2},v_{i-1}u_{i+2},\dots,u_{i+t+1}v_{i+t},v_{i+t+1}u_{i+t}\] where $i\in\{1,2,\dots,t\}$; and let $G_{t+1}$ be the matching with edges $u_jv_j$ for $j\in\{1,2,\dots,2t\}$. Figure \ref{Fig6} shows that $G_i$ is a planar graph of girth at least $4$. Note that each planar graph $G_i$ has a drawing with the vertices $u_i$, $u_{i+1}$, $v_i$ and $v_{i+1}$ in the exterior face. 
\begin{figure}[!htbp]
\begin{center}
\includegraphics{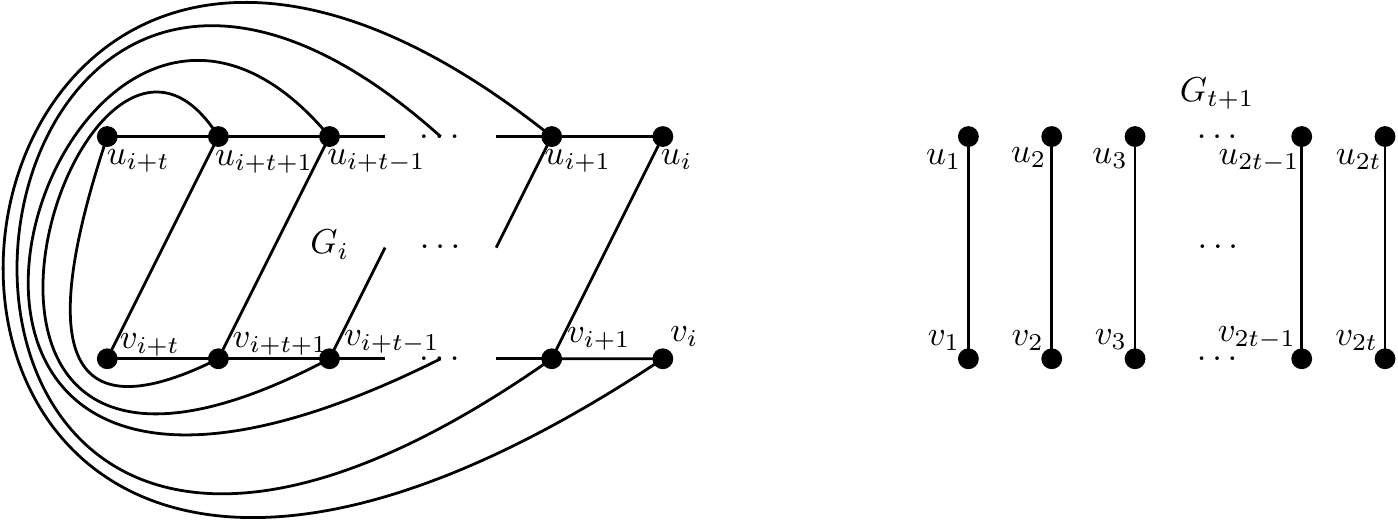}
\caption{\label{Fig6} (Left) The graph $G_i$. (Right) The graph $G_{t+1}$.}
\end{center}
\end{figure}

The resulting $t+1$ planar subgraphs of girth at least $4$ show that $\theta(4,K_{4t})\leq t+1$. Hence $\theta(4,K_{4t})= t+1$ because $\theta(4,K_{4t})\geq\left\lceil \frac{\binom{4t}{2}}{2(4t-2)}\right\rceil =t+1$, see \cite{R}.
%%%%%%%%%%%%%%%%%%%%%%%%%%%%%%%%%%%%%%%%%%%%%%%%%%%%%%%%%%%%%%%%%%%%%%%%%%%%%%%%%%%%%%%%%%%%%%%%%%%%%%%%%
\subsection{On Theorem \ref{teo3}}\label{On Theorem 3}
In this subsection is proven Theorem \ref{teo3}. To begin with, we show a planar graph $G$ of order $n=4t^2+2$ and girth $4$, and we color its vertices in a proper and complete way using $k=\lfloor 1/2+\sqrt{4n-31/4}\rfloor$ colors. Next, we show that for any $n$, the upper bound of Equation \ref{eq5} is sharp for a given planar graph of order $n$ and girth $4$. Finally, we show that for any maximal planar graph $G$ of girth $4$ has a tight constant lower bound.
\begin{proof}[Proof of Theorem \ref{teo3}]
i)
Consider the set of planar subgraphs $\{G_1,G_2,\dots,G_{t+1}\}$ of $K_{4t}$ described in Subsection \ref{4-girth-planar decomposition} and color each vertex with the corresponding label of the decomposition. For $i\in\{1,\ldots,t\}$, take a planar drawing of $G_i$ such that the vertices $u_i$, $u_{i+1}$, $v_i$ and $v_{i+1}$ are in the exterior face, see Figure \ref{Fig6} (Left).

Insert $G_t$ in the adjacent face to the exterior face of $G_{t-1}$ with vertices $u_{t-1}$, $u_{t}$, $v_{t-2}$ and $v_{t}$ and identify the colored vertices $u_{t}$ and $v_t$ of $G_t$ with the colored vertices $u_{t}$ and $v_t$ of $G_{t-1}$, respectively. We call to the resulting graph $G'_{t-1}$, see Figure \ref{Fig7} (Left). Insert $G'_{t-1}$ in the adjacent face to the exterior face of $G_{t-2}$ with vertices $u_{t-1}$, $u_{t-3}$, $v_{t-1}$ and $v_{t-2}$ and identify the colored vertices $u_{t-1}$ and $v_{t-1}$ of $G'_{t-1}$ with the colored vertices $u_{t-1}$ and $v_{t-1}$ of $G_{t-2}$, respectively. We call to the resulting graph $G'_{t-2}$. We repeat the procedure to obtain the graph $G'_1$.

Finally, insert every edge $u_jv_j$ of $G_{t+1}$, for $j\in\{1,2,\dots,2t\}$, in some face with a vertex of $G'_1$ colored $u_j$ and identify them, see Figure \ref{Fig7} (Right). The resulting planar graph $G$ has a proper and complete coloring with $4t$ colors and order $n=4t+(t-1)(4t-2)+2t=4t^2+2$. 

Indeed, the coloring is proper and complete because every pair of different colors $x_j$ and $x_{j'}$ are the labels of the complete graph $K_{4t}$, the edge $x_jx_{j'}$ is an edge in some subplanar graph $G_1$, $G_2$,...,$G_t$, $G_{t+1}$ exactly once and then, there exists an edge of $G$ with colors $x_j$ and $x_{j'}$. Therefore $4t\leq \psi(G)$.
\begin{figure}[!htbp]
\begin{center}
\includegraphics{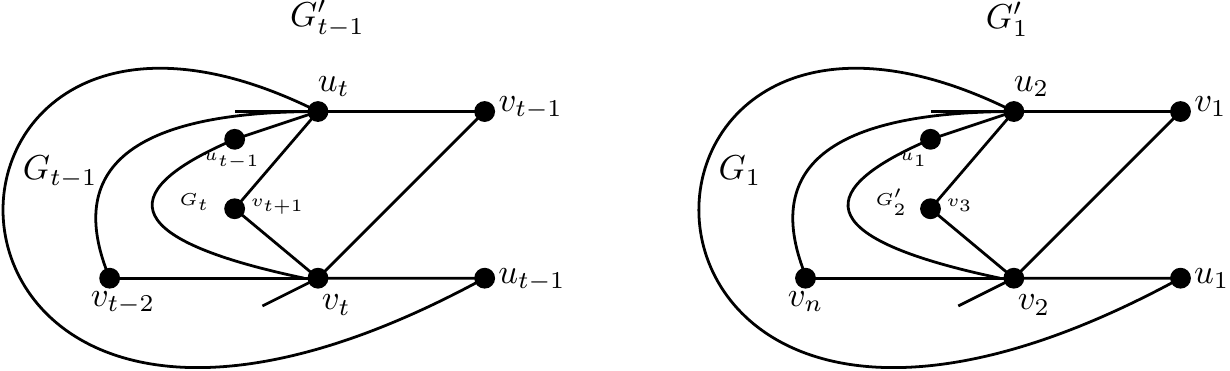}
\caption{(Left) The planar graph $G'_{t-1}$. (Right) The planar graph $G'_1$.}\label{Fig7}
\end{center}
\end{figure}

On the other hand, for $n=4t^2+2$, $\left\lfloor\sqrt{4n-31/4}+1/2\right \rfloor$ is equal to $\left\lfloor\sqrt{16t^2-1/4}+1/2\right \rfloor.$
Since $\sqrt{16t^2-1/4}+1/2<4t+1$ since $16t^2-1/4\leq(4t+1/2)^2=16t^2+4t+1/4$ and $4t\leq \sqrt{16t^2-1/4}+1/2$ since $16t^2-8t+1/4=(4t-1/2)^2\leq 16t^2-1/4,$ it follows that \[\left\lfloor\sqrt{16t^2-1/4}+1/2\right \rfloor=4t.\]

In conclusion, we have \[\left\lfloor \sqrt{4n-31/4}+1/2\right\rfloor\leq\psi(G)\leq\psi_s(G)\leq\left\lfloor \sqrt{4n-31/4}+1/2\right\rfloor\]and the result follows.

ii)
Let $n$ and $t$ be natural numbers such that $4t^2+2\leq n<N=4(t+1)^2+2$, and let $G$ be the planar graph of order $4t^2+2$ and girth $4$ constructed in the proof of Theorem \ref{teo3} i). Let $H$ be the graph of order $n$ obtained by adding $n-(4t^2+2)$ vertices to $G$. Therefore $H$ is a planar graph of order $n$ with girth $4$ and achromatic number $4t$. Now, since $n<N$ then \[\left\lfloor\sqrt{4n-31/4}+1/2\right \rfloor<\left\lfloor\sqrt{4N-31/4}+1/2\right \rfloor=4(t+1)=\psi(H)+4.\]
Therefore \[\psi(H) \geq \left\lfloor\sqrt{4n-31/4}+1/2\right \rfloor-3\]
and the result follows.

iii) Consider the bipartite graph $K_{2,n-2}$ of order $n\geq 4$. This graph is planar of girth $4$ with $2(n-2)$ edges, therefore it is a maximal planar graph of girth $4$. Its achromatic number is $2$, see \cite{MR2450569}. And its pseudoachromatic number is $3$, otherwise, there exist at least two chromatic classes contained in the partition with $n-2$ vertices and then, the coloring is not complete. Since $C_4$ is a subgraph of $K_{2,n-2}$ and it has pseudoachromatic number is $3$, the result follows.
\end{proof}
%%%%%%%%%%%%%%%%%%%%%%%%%%%%%%%%%%%%%%%%%%%%%%%%%%%%%%%%%%%%%%%%%%%%%%%%%%%%%%%%%%%%%%%%%%%%%%%%%%%%%%%%%
\section{Graphs embedded on a surface} \label{Other surfaces}
In this section, we present results for surfaces, specifically, we extend Theorem \ref{teo1} to a surface $S$ instead of the plane.

To begin with, we establish some definitions. A \emph{surface} is a topological space $S$, which is compact arc-connected and Hausdorff, such that every point has a neighbourhood homeomorphic to the Euclidean plane $\mathbb{R}^2$. 

An embedding $\sigma\colon G\rightarrow S$ of a graph $G$ in $S$ maps the vertices of $G$ to distinct points in $S$ and each edge $xy$ of $G$ to an arc $\sigma(x)-\sigma(y)$ in $S$, such that no inner point of such an arc is the image of a vertex or lies on another arc. A face of $G$ in $S$ is a component (arc-connected component) of $S\setminus \sigma(G)$ where $\sigma(G)$ is the union of all those points and arcs of $S$, and the subgraph of $G$ that $\sigma$ maps to the frontier of this face is its boundary. When each face is (homeomorphic to) a disc and $K_3$ is its boundary, we say that $G$ is a \emph{maximal $S$-graph}.

For every surface $S$ there is an integer called the \emph{Euler characteristic} such that whenever a graph $G$ with $n$ vertices and $m$ edges is embedded in $S$ so that there are $f$ faces and every face is a disc, then it is equal to $n-m+f$.

We work instead with the invariant $\varepsilon(S)$ defined as $2-n+m-f$ and called the \emph{Euler genus} of $S$, see \cite{MR2159259}. The well-known Classification Theorem of Surfaces says that, up to homeomorphism, every surface is a sphere with some finite number of handles or crosscaps.
\begin{lemma}[See \cite{MR2159259}]
Adding a handle to a surface raises its Euler genus by 2. And adding a crosscap to a surface raises its Euler genus by 1.
\end{lemma}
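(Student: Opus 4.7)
The plan is to argue via cellular embeddings, using the defining formula $\varepsilon(S)=2-n+m-f$. It suffices to exhibit, for the surface $S'$ obtained from $S$ by attaching a handle (respectively, a crosscap), a cellular embedding of some graph in $S'$ and to compare its value of $n-m+f$ with that of a conveniently chosen cellular embedding of $S$. First I would fix a triangulation of $S$; any cellular embedding can be refined to one by iterated barycentric subdivision, so we may assume as many triangular faces as needed.

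For the handle: take two disjoint triangular faces $T_1=u_1u_2u_3$ and $T_2=v_1v_2v_3$, remove their open interiors from $S$, and identify each with one of the two boundary circles of a cylinder $C\cong S^1\times[0,1]$. Triangulate $C$ by the three meridians $u_iv_i$ and the three diagonals $u_iv_{i+1}$ (indices modulo $3$); this adds six edges and six triangular faces and no vertex, while the faces $T_1,T_2$ disappear. The resulting cellular embedding of $S'$ therefore satisfies
\[
n'-m'+f'=n-(m+6)+(f-2+6)=(n-m+f)-2,
\]
so $\varepsilon(S')=\varepsilon(S)+2$.

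For the crosscap: pick one triangular face $T=u_1u_2u_3$, remove its open interior from $S$, and glue along $T$ a triangulated M\"obius band $M$ whose unique boundary circle is the cycle $u_1u_2u_3$. Since $\chi(M)=0$, any such triangulation satisfies $n_M-m_M+f_M=0$; the gluing identifies $3$ vertices and $3$ edges of $M$ with the corresponding elements in $S$ while destroying the face $T$, yielding
\[
n''-m''+f''=(n-m+f)+(n_M-m_M+f_M)-1=(n-m+f)-1,
\]
and hence $\varepsilon(S'')=\varepsilon(S)+1$.

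The only nonroutine step is producing a concrete triangulation of the M\"obius band with a triangular boundary, which can be written down explicitly using a small number of interior vertices; the cylinder construction and the face counts are then elementary, and independence of $\varepsilon$ from the chosen cellular embedding (which is classical) completes the argument.
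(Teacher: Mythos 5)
The paper does not prove this lemma at all: it is quoted from Diestel's book, so there is no in-paper argument to compare against. Your surgery-plus-Euler-formula proof is correct and is essentially the standard textbook argument. The two counts check out: for the handle, $n'-m'+f'=n-(m+6)+(f-2+6)=(n-m+f)-2$, and for the crosscap, gluing a M\"obius band $M$ with $\chi(M)=0$ along a triangular boundary (identifying $3$ vertices and $3$ edges and deleting one face) gives $(n-m+f)-1$; since $\varepsilon=2-(n-m+f)$, the genus rises by $2$ and $1$ respectively. The only ingredients you lean on are classical and you name them: invariance of $n-m+f$ over cellular embeddings, the ability to refine to a triangulation with two vertex-disjoint triangular faces (barycentric subdivision), and the existence of a triangulated M\"obius band with triangular boundary, for which the standard $6$-vertex triangulation of the projective plane minus one face is an explicit witness. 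One small point worth stating explicitly is that the new faces introduced by the surgery are all triangles, hence discs, so the resulting embedding is again cellular and the Euler formula still applies.
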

\begin{lemma}[See \cite{MR1699257}]
A crosshandle is homeomorphic to two crosscaps.
\end{lemma}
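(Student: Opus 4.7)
The plan is to interpret the lemma as an equality of connected sums and reduce it to the classical identity $\mathbb{RP}^2 \# \mathbb{RP}^2 \cong K$, where $K$ denotes the Klein bottle. First I would observe that attaching a crosshandle to a surface $S$ is nothing but the connected sum $S \# T$, where $T$ is the closed surface obtained from a sphere by removing two disjoint open disks and gluing in a cylinder $S^1 \times [0,1]$ across the two new boundary circles, with one of the two identifications reversing orientation. The surface $T$ is closed, non-orientable (an orientation-reversing loop traverses the cylinder once and returns through the sphere), and has Euler characteristic $2 - 2 + 0 = 0$, so by the classification of closed surfaces $T \cong K$. Hence attaching a crosshandle to $S$ yields $S \# K$.

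Similarly, adding two crosscaps to $S$ means performing the crosscap operation twice, and a single crosscap is by definition the connected sum with $\mathbb{RP}^2$, since removing a disk and gluing in a Möbius band is the same as gluing in a copy of $\mathbb{RP}^2$ minus a disk. Two crosscaps therefore produce $(S \# \mathbb{RP}^2) \# \mathbb{RP}^2 = S \# (\mathbb{RP}^2 \# \mathbb{RP}^2)$, and it remains to establish $\mathbb{RP}^2 \# \mathbb{RP}^2 \cong K$.

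For this last identity I would give the standard polygonal argument from the proof of the classification theorem for closed surfaces. Represent $\mathbb{RP}^2 \# \mathbb{RP}^2$ by the square with boundary word $a a b b$, obtained by glueing the two digons $aa$ and $bb$ along a common edge. Cutting along a diagonal joining the head of the first $a$ to the head of the second $b$, flipping one of the resulting triangles so that the two $a$-edges cancel upon re-gluing, and relabelling, transforms the word $a a b b$ into $c d c^{-1} d$, which is the canonical presentation of the Klein bottle. An equally valid and shorter route is to invoke the classification of closed surfaces directly: both $\mathbb{RP}^2 \# \mathbb{RP}^2$ and $K$ are closed, connected, non-orientable, and have Euler characteristic $0$, hence are homeomorphic.

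The main obstacle is the polygon surgery step, where one must carefully track edge identifications through each cut-flip-reglue operation to certify that the resulting word is genuinely the Klein bottle word and not some other length-four word on two letters; the classification-theorem shortcut avoids this at the cost of being less constructive. Either way, combining the three identifications $\mathrm{crosshandle}(S) \cong S \# K$, two-crosscaps$(S) \cong S \# (\mathbb{RP}^2 \# \mathbb{RP}^2)$, and $K \cong \mathbb{RP}^2 \# \mathbb{RP}^2$, establishes the lemma.
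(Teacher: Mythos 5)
Your argument is correct, but note that the paper offers no proof of this lemma at all: it is stated as a known fact with a pointer to Francis and Weeks's account of Conway's ZIP proof, where the identity is established by an explicit cut-and-paste (``zipper'') picture converting a crosshandle into two crosscaps. Your route is the standard algebraic-topology alternative: identify crosshandle attachment with connected sum with the closed surface $T$ obtained by capping a twisted tube onto a twice-punctured sphere, check that $T$ is closed, non-orientable and has $\chi(T)=0$, identify two crosscaps with connected sum with $\mathbb{RP}^2\#\mathbb{RP}^2$, and then prove $\mathbb{RP}^2\#\mathbb{RP}^2\cong K$ either by the polygon-word surgery $aabb\rightsquigarrow cdc^{-1}d$ or by the classification theorem. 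Both reductions and the word manipulation are sound (and $cdc^{-1}d$ is indeed a Klein bottle word: one letter repeats with equal exponents, all four corners are identified, so $\chi=0$ and the surface is non-orientable). The one caveat worth flagging is logical order: in the source being cited, this lemma is a step \emph{in the proof of} the classification theorem, so the ``shorter route'' that invokes classification would be circular there; it is harmless in the present paper, which takes the classification theorem as given and only uses the lemma to bookkeep crosscaps when building the embeddings of Theorem \ref{teo5}. The constructive polygon-surgery version avoids even that caveat, at the cost of the careful edge-tracking you already identify as the main obstacle.
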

\begin{lemma}[Dyck's Theorem. See \cite{MR1699257}]
Handles and crosshandles are equivalent in the presence of a crosscap.
\end{lemma}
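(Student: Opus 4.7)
The plan is to reduce Dyck's theorem to the preceding two lemmas via the Classification Theorem of Surfaces stated in the excerpt. Let $S_0$ be a surface containing at least one crosscap, let $S$ be obtained from $S_0$ by attaching a handle and $S'$ obtained from $S_0$ by attaching a crosshandle. I want to show $S \cong S'$.

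First I would compute Euler genera: attaching a handle raises $\varepsilon$ by $2$; attaching a crosshandle, which by the previous lemma is homeomorphic to two crosscaps, also raises $\varepsilon$ by $1+1=2$. Hence $\varepsilon(S)=\varepsilon(S_0)+2=\varepsilon(S')$. Both surfaces contain the crosscap inherited from $S_0$, so both are non-orientable. The Classification Theorem as quoted says that, up to homeomorphism, every surface is a sphere with handles or with crosscaps; the non-orientability of $S$ and $S'$ rules out the handles form, so each is homeomorphic to a sphere with crosscaps. Finally, the number of crosscaps in the sphere-with-crosscaps normal form is pinned down by the Euler genus (each crosscap contributes $1$ to $\varepsilon$). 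Since $\varepsilon(S)=\varepsilon(S')$, the two surfaces are homeomorphic to the same sphere-with-crosscaps, and therefore to each other.

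The main obstacle is that the excerpt's formulation of the Classification Theorem is slightly informal, and a rigorous deduction requires extracting from it the fact that the Euler genus is a complete invariant within the non-orientable family. If this is not granted, the back-up plan is a direct polygon-word argument: a handle contributes a substring $aba^{-1}b^{-1}$, a crosshandle contributes $abab$, and a crosscap contributes $cc$; the standard cut-and-glue moves used in the proof of the classification allow one to rewrite $aba^{-1}b^{-1}cc$ as $abab\,cc$. Geometrically, this corresponds to sliding one foot of the handle along the orientation-reversing loop through the crosscap, which flips the attaching orientation at that foot and converts the handle into a crosshandle. Writing this rewriting out in full is standard but tedious, so in practice I would simply cite the Classification Theorem and proceed as in the previous paragraph.
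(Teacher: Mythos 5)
The paper does not prove this lemma at all: it is quoted as a known result with a citation to Francis and Weeks' ``Conway's ZIP proof,'' where it is established by a direct cut-and-paste (zipper) argument --- essentially your back-up plan. Your primary route is genuinely different: you deduce Dyck's theorem from the Classification Theorem together with the invariance of Euler genus and the two preceding lemmas. As a formal deduction from the statements the paper grants, this works: both surfaces are non-orientable (each contains the crosscap of $S_0$), both have Euler genus $\varepsilon(S_0)+2$, and a non-orientable surface in normal form is a sphere with $k$ crosscaps where $k$ equals the Euler genus, so the two normal forms coincide. What this buys is brevity; what it costs is that the argument is logically backwards relative to the standard development, since Dyck's theorem (or the equivalent word manipulation $aba^{-1}b^{-1}cc\sim aabbcc$) is ordinarily one of the \emph{ingredients} in proving that every non-orientable surface reduces to the sphere-with-crosscaps normal form. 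So if the Classification Theorem is taken as a black box your proof is fine, but it cannot replace the cited proof in a self-contained treatment. One small slip in the back-up sketch: the polygon word of a crosshandle is $abab^{-1}$ (equivalently $aabb$, the Klein bottle with Euler genus $2$), not $abab$, which is a presentation of the projective plane and has Euler genus $1$; with that correction the target identity is $aba^{-1}b^{-1}\,cc\sim aabb\,cc$, i.e., handle plus crosscap equals crosshandle plus crosscap.
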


For any maximal $S$-graph $G$, $3f=2m$, then $m=3n+3\varepsilon(S)-6$ and, by Equation \ref{eq2} \[\psi_s(G)\leq \left\lfloor \sqrt{6(n+\varepsilon(S))-47/4}+1/2\right\rfloor.\]
We prove the following theorems.
\begin{theorem} \label{teo4}
Let $S$ be an orientable surface with $h\geq0$ handles.

i) For $n=6t^2+3t+1-\varepsilon(S)$, with $n,t\geq 1$, there exists a graph $G$ of order $n$, embeddable in the surface $S$, such that 
\[\psi(G)=\psi_s(G)=\left\lfloor \sqrt{6(n+\varepsilon(S))-47/4}+1/2\right\rfloor.\]
ii) For $n\geq10-\varepsilon(S)\geq 1$, there exists an embeddable $S$-graph $H$ of order $n$ such that
\[\psi(H) \geq  \left\lfloor \sqrt{6(n+\varepsilon(S))-47/4}-9/2\right\rfloor.\]
\end{theorem}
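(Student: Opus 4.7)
For part i), the plan is to extend the planar construction of Theorem \ref{teo1} i) by exploiting the additional topology of $S$. Set $\varepsilon=\varepsilon(S)=2h$ and $n_0=6t^2+3t+1$, and let $G_0$ be the optimal colored planar graph of order $n_0$ with achromatic number $6t+1$ built there. Beginning with the sphere embedding of $G_0$, we attach $h$ handles one by one; the $i$-th handle joins two faces $F_1,F_2$ of the current embedding and is used to route two disjoint arcs, one from $u_1\in\partial F_1$ to $v_1\in\partial F_2$ and one from $u_2\in\partial F_1$ to $v_2\in\partial F_2$, where $(u_1,v_1)$ and $(u_2,v_2)$ are pairs of same-colored vertices lying in distinct planar pieces of the decomposition of $G_0$. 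Contracting these two arcs identifies the pairs (and edge contraction preserves the genus of the host surface), so each handle accounts for two vertex identifications while contributing $2$ to the Euler genus. After processing all $h$ handles we obtain a graph $G$ of order $n_0-2h=n$ embedded in a surface of Euler genus at most $\varepsilon$, hence in $S$.

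The induced coloring of $G$ is proper (identified pairs share a color and were non-adjacent in $G_0$, so no loops arise) and complete (every pair of distinct colors still has a representative edge, since contraction only merges parallel edges without deleting cross-color ones). Hence $\psi(G)\ge 6t+1$. Combined with the upper bound $\psi_s(G)\le \lfloor\sqrt{6(n+\varepsilon)-47/4}+1/2\rfloor$, which follows from Equation \ref{eq2} together with the edge bound $m\le 3(n+\varepsilon)-6$ for $S$-embedded graphs, and the calculation (identical to Theorem \ref{teo1} i)) that this floor equals $6t+1$ when $n+\varepsilon=n_0$, we conclude $\psi(G)=\psi_s(G)=6t+1$.

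Part ii) mirrors Theorem \ref{teo1} ii): for $n_0-\varepsilon\le n<6(t+1)^2+3(t+1)+1-\varepsilon$, take the graph $G$ from part i) for this $t$ and add $n-(n_0-\varepsilon)$ isolated vertices to obtain an $S$-embeddable graph $H$ of order $n$ with $\psi(H)\ge 6t+1$; the inequality $\lfloor\sqrt{6(n+\varepsilon)-47/4}+1/2\rfloor<6(t+1)+1=\psi(H)+6$ then yields $\psi(H)\ge\lfloor\sqrt{6(n+\varepsilon)-47/4}-9/2\rfloor$. The main obstacle is combinatorial: verifying that at each handle we can choose a face pair $F_1,F_2$ of $G_0$ and two same-color vertex pairs with $u_1,u_2$ on $\partial F_1$ and $v_1,v_2$ on $\partial F_2$, without reusing vertices or faces across handles. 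This is handled by inspection of the decomposition, using that each piece $G_i$ is a maximal planar graph with $6t-2$ triangular faces and that each color class is distributed across at least $t$ of the $t+1$ pieces, giving ample flexibility to pick compatible pairs so long as $\varepsilon$ stays within the slack of the decomposition (which is guaranteed by the hypothesis $n\ge 1$ for $t$ taken sufficiently large relative to $\varepsilon$).
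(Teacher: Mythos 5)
Your proposal follows essentially the same route as the paper: start from the optimal colored planar graph of order $6t^2+3t+1$, add $h$ handles, use each handle to identify two pairs of same-colored vertices (so the order drops by $\varepsilon(S)=2h$ while the proper complete $(6t+1)$-coloring survives), and close with the same floor computation and the same vertex-padding argument for part ii). The only difference is that the paper makes the choice concrete --- cutting in the triangle face $u_iv_iw_i$ of $G_i$ and in the exterior face containing $z$, and identifying the vertices colored $u_i$ and $v_i$ --- where you defer to ``inspection of the decomposition,'' but the underlying construction is the same.
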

\begin{proof}
i) Consider the colored optimal planar graph $G$ of $n=6t^2+3t+1$ vertices embedded in the plane which was described in the proof of Theorem \ref{teo1}, see Figure \ref{Fig3}. We proceed to add $h$ handles to the plane in the following way.

Cutting along a circle $C_i$ in the triangle face $u_iv_iw_i$ contained in the subgraph $G_i$ of $G$ and also along a circle $C'_i$ in the face containing $z$, which was the exterior face, for all $i\leq \min\{h,t\}$. Now, add a handle connecting both circles. For each $i$, we identify pairs of vertices with the same color, namely, the vertices $u_i$ and $v_i$ of the exterior face with the colored vertices $u_i$ and $v_i$ of the triangle face of $G_i$, respectively, which is possible moving them through the handle, see Figure \ref{Fig8}.
\begin{figure}[!htbp]
\begin{center}
\includegraphics{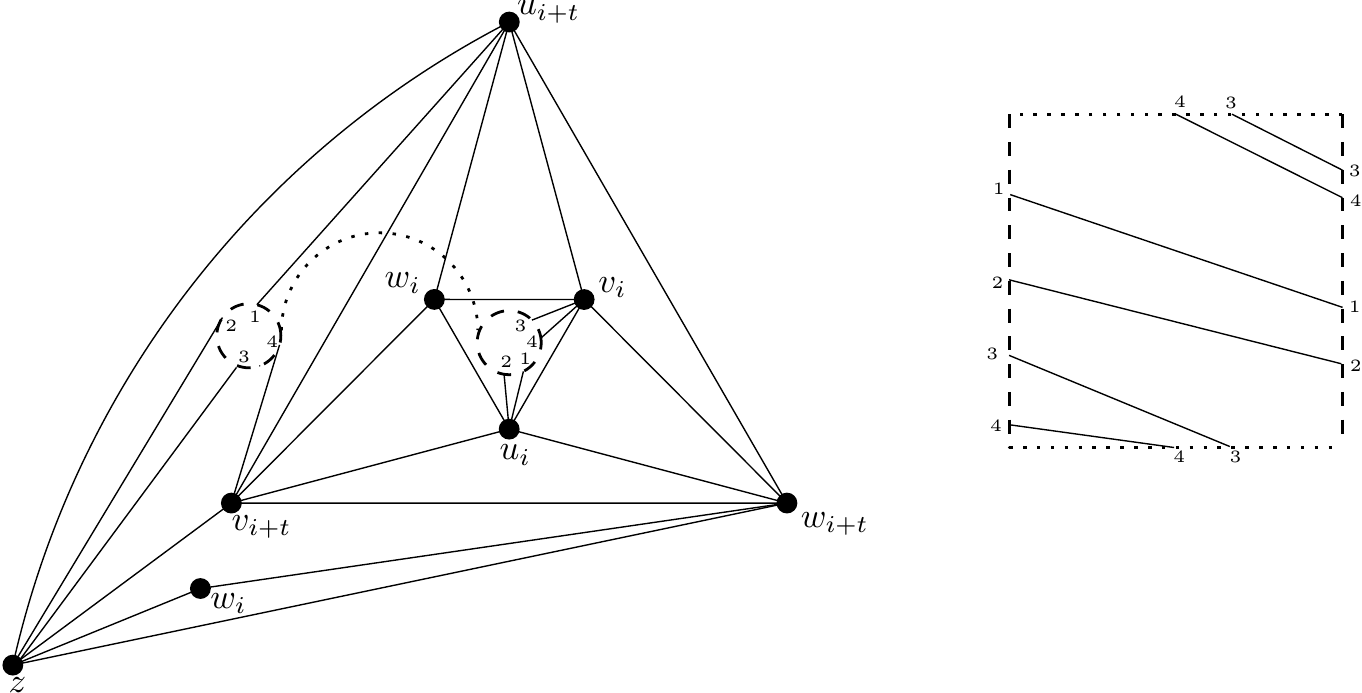}
\caption{Adding a handle and identifying colored vertices $u_i$ and $v_i$.}\label{Fig8}
\end{center}
\end{figure}

We obtain a surface with $h$ handles in which is embedded the graph $G$ of $n=6t^2+3t+1-\varepsilon(S)$ vertices and colored with $6t+1$ colors, see Figure \ref{Fig9}.
\begin{figure}[!htbp]
\begin{center}
\includegraphics{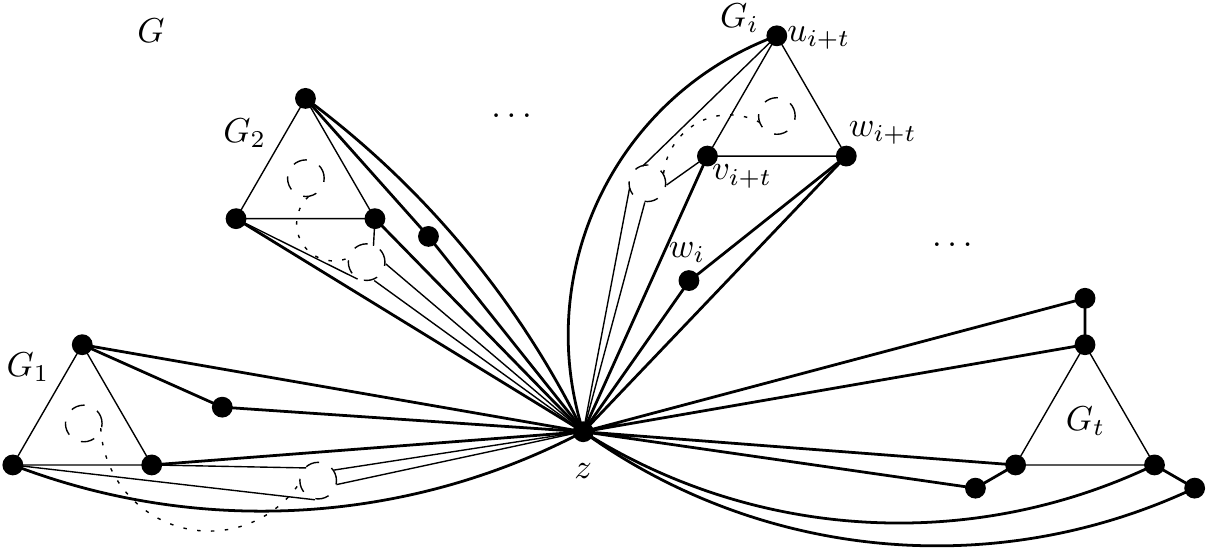}
\caption{The embedded graph $G$ for $i<t$.}\label{Fig9}
\end{center}
\end{figure}

On the other hand, for $n=6t^2+3t+1-\varepsilon(S)$, $\left\lfloor\sqrt{6(n+\varepsilon(S))-47/4}+1/2\right \rfloor$ is equal to  \[\left\lfloor\sqrt{36t^2+18t-23/4}+1/2\right \rfloor=6t+1,\]
we have \[\left\lfloor \sqrt{6(n+\varepsilon(S))-47/4}+1/2\right\rfloor\leq\psi(G)\leq\psi_s(G)\leq\left\lfloor \sqrt{6(n+\varepsilon(S))-47/4}+1/2\right\rfloor\]and the result follows.

ii)
Assume that $6t^2+3t+1-\varepsilon(G)\leq n<N=6(t+1)^2+3(t+1)+1-\varepsilon(G)$ for some natural number $t$. Let $G$ be the $S$-graph of order $6t^2+3t+1-\varepsilon(G)$ constructed in the proof of Theorem \ref{teo4} i); and let $H$ be the graph of order $n$ obtained by adding $n-(6t^2+3t+1-\varepsilon(G))$ vertices to $G$. It is clearly that $H$ is an $S$-graph of order $n$ with achromatic number $6t+1$. Since $n<N$, we have that \[\left\lfloor\sqrt{6(n+\varepsilon(S))-47/4}+1/2\right \rfloor<\left\lfloor\sqrt{6(N+\varepsilon(S))-47/4}+1/2\right \rfloor=6(t+1)+1=\psi(H)+6.\]
Therefore \[\psi(H) \geq \left\lfloor\sqrt{6(n+\varepsilon(S))-47/4}+1/2\right \rfloor-5\]
and the result follows.
\end{proof}

\begin{theorem} \label{teo5}
Let $S$ be a non-orientable surface with $c\geq1$ crosscaps and let $\varphi(c)$ be a function such that $\varphi(c)=1$ if $c$ is odd and $\varphi(c)=0$ otherwise.

i) For $n=6t^2+3t+1-\varepsilon(S)+\varphi(c)$, with $n,t\geq 1$, there exists a graph $G$ of order $n$, embeddable in a surface $S$ with $c$ crosscaps, such that 
\[\psi(G)=\psi_s(G)=\left\lfloor \sqrt{6(n+\varepsilon(S))-47/4}+1/2\right\rfloor.\]
ii) For $n\geq10-\varepsilon(S)+\varphi(c)\geq 1$, there exists an embeddable $S$-graph $H$ of order $n$ such that
\[\psi(H) \geq  \left\lfloor \sqrt{6(n+\varepsilon(S))-47/4}-9/2\right\rfloor.\]
\end{theorem}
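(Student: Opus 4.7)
The plan is to emulate the proof of Theorem~\ref{teo4}, where one handle was attached per identification of two pairs of same-colored vertices, by combining handles with at most one crosscap according to the parity of $c$. By Dyck's Theorem together with the Classification Theorem of Surfaces, any non-orientable surface $S$ with $c\geq1$ crosscaps is homeomorphic to a sphere with either $c/2$ crosshandles (when $c$ is even) or with one crosscap and $(c-1)/2$ handles (when $c$ is odd). The value $\varphi(c)$ exactly records the ``unmatched'' crosscap that contributes to the Euler genus but does not reduce the number of vertices.

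For part i), I start with the optimal colored planar graph $G$ of $6t^2+3t+1$ vertices from Theorem~\ref{teo1}. If $c=2h$ is even, I attach $h$ crosshandles exactly as the handles were attached in the proof of Theorem~\ref{teo4}: cut a disk inside the triangle face $u_iv_iw_i$ of $G_i$ and another inside the face containing $z$, and then join them by a twisted rather than a straight tube. Because a crosshandle is homeomorphic to two crosscaps, each such attachment raises the Euler genus by $2$ while the boundary circles still allow identifying two pairs of same-colored vertices, removing $2$ vertices per crosshandle. This yields a graph with $6t^2+3t+1-c=n$ vertices. If $c=2h+1$ is odd, I instead perform $h$ ordinary handle attachments as in Theorem~\ref{teo4} (removing $c-1$ vertices and raising the Euler genus by $c-1$) and then add a single crosscap in any remaining face, by excising a small disk and identifying antipodal boundary points; this leaves the graph unchanged while raising the Euler genus by $1$, producing a graph with $6t^2+3t+2-c=n$ vertices embedded in $S$.

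In both cases the proper complete $(6t+1)$-coloring of $G$ is preserved, and the identity
\[
\left\lfloor\sqrt{6(n+\varepsilon(S))-47/4}+1/2\right\rfloor=\left\lfloor\sqrt{36t^2+18t-23/4}+1/2\right\rfloor=6t+1
\]
is literally the arithmetic already verified in Theorem~\ref{teo4}~i), so the bounds in \eqref{eq1} squeeze the achromatic and pseudoachromatic numbers to the desired value. Part ii) is obtained by the padding argument of Theorem~\ref{teo4}~ii): for any $n$ with $6t^2+3t+1-c+\varphi(c)\leq n<6(t+1)^2+3(t+1)+1-c+\varphi(c)$, append $n-(6t^2+3t+1-c+\varphi(c))$ isolated vertices to the graph constructed in part i) and use the fact that the floor expression increases by at most $6$ over this interval.

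The main obstacle is the topological bookkeeping for the even-$c$ case: one has to confirm that twisting a connecting tube into a crosshandle neither obstructs the identification of the two pairs of boundary vertices nor destroys the planarity-type property that each face of the resulting embedding is a $2$-cell. Both points follow directly from the cited lemmas, since a crosshandle is glued from two crosscaps whose individual attaching neighbourhoods contain the two disks excised from the original faces, so that the combinatorial identification is dictated purely by matching the colored vertices on the boundary circles.
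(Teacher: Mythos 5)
Your construction is essentially the paper's: attach crosshandles between a triangle face of $G_i$ and the face containing $z$, identify the colored vertices $u_i,v_i$ through them, and add one leftover crosscap when $c$ is odd (the paper uses $\lfloor c/2\rfloor$ crosshandles plus a crosscap in the odd case, whereas you use $(c-1)/2$ ordinary handles plus a crosscap, but by Dyck's Theorem these give the same surface). One small correction: when $\varphi(c)=1$ you have $n+\varepsilon(S)=6t^2+3t+1+\varphi(c)$, so the radicand is $36t^2+18t+6\varphi(c)-23/4$ rather than $36t^2+18t-23/4$; the identity is therefore not literally the one from Theorem~\ref{teo4}~i), and you need the additional (easy) check that $36t^2+18t+6\varphi(c)-23/4\leq(6t+3/2)^2$, i.e.\ $6\varphi(c)\leq 8$, which the paper carries out and which still yields the value $6t+1$.
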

\begin{proof}
i) Consider the colored optimal planar graph $G$ of $n=6t^2+3t+1$ vertices embedded in the plane which was described in the proof of Theorem \ref{teo1}, see Figure \ref{Fig3}. We proceed to add $c$ crosscaps to the plane in the following way.

Cutting along a circle $C_i$ in the triangle face $u_iv_iw_i$ contained in the subgraph $G_i$ of $G$ and also along a circle $C'_i$ in the face containing $z$, which was the exterior face, for all $i\leq \min\{c/2,t\}$. Now, add a crosshandle connecting both circles. For each $i$, we identify pairs of vertices with the same color, namely, the vertices $u_i$ and $v_i$ of the exterior face with the colored vertices $u_i$ and $v_i$ of the triangle face of $G_i$, respectively, which is possible moving them through the crosshandle, see Figure \ref{Fig10}.
\begin{figure}[!htbp]
\begin{center}
\includegraphics{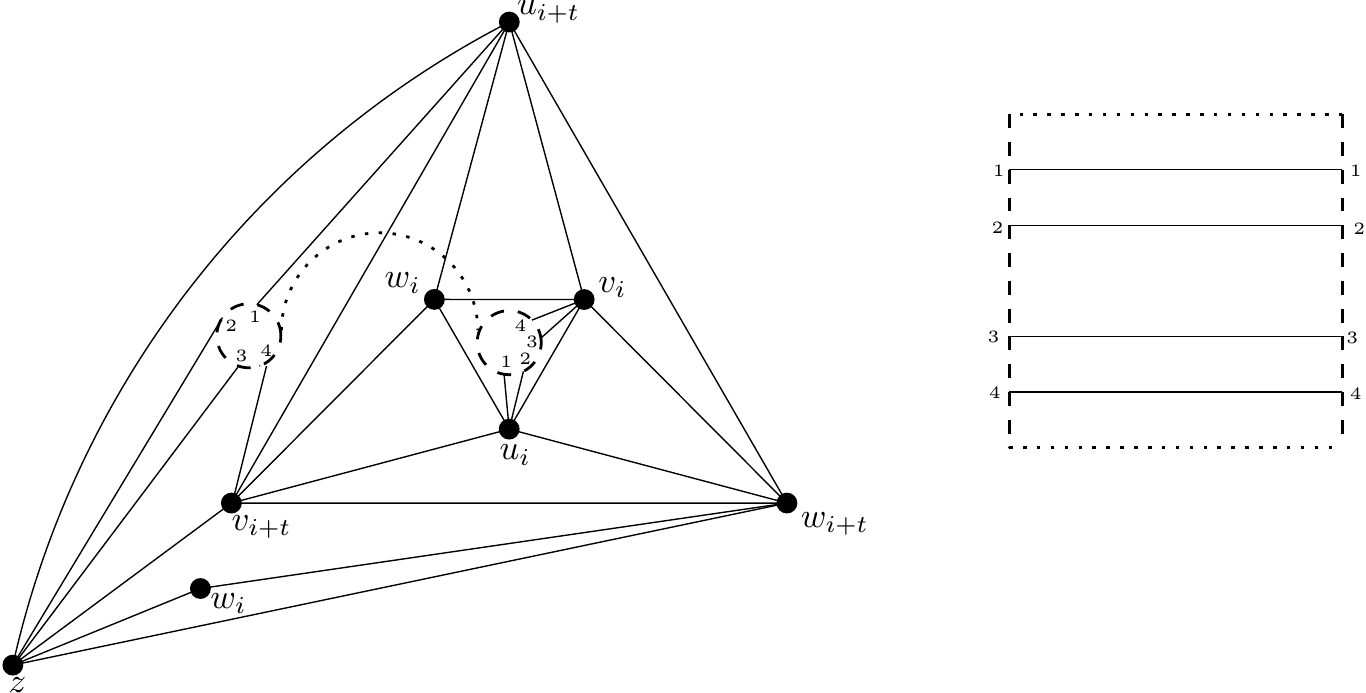}
\caption{Adding a crosshandle and identifying colored vertices $u_i$ and $v_i$.}\label{Fig10}
\end{center}
\end{figure}

If $\varphi(c)=1$, we add a crosscap to some face. We obtain a surface with $c$ crosscaps in which is embedded the graph $G$ of $n=6t^2+3t+1-\varepsilon(S)+\varphi(c)$ vertices and colored with $6t+1$ colors, see Figure \ref{Fig9}.

On the other hand, for $n=6t^2+3t+1-\varepsilon(S)+\varphi(c)$, $\left\lfloor\sqrt{6(n+\varepsilon(S))-47/4}+1/2\right \rfloor$ is equal to \[\left\lfloor\sqrt{36t^2+18t+6\varphi(c)-23/4}+1/2\right \rfloor.\]
Since $\sqrt{36t^2+18t+6\varphi(c)-23/4}+1/2<6t+2$ because $36t^2+18t+6\varphi(c)-23/4\leq(6t+3/2)^2=36t^2+18t+9/4$ and $6t+1\leq \sqrt{36t^2+18t+6\varphi(c)-23/4}+1/2$ because $36t^2+6t+1/4=(6t+1/2)^2\leq36t^2+18t+6\varphi(c)-23/4,$ it follows that \[\left\lfloor\sqrt{36t^2+18t+6\varphi(c)-23/4}+1/2\right \rfloor=6t+1.\]

Finally, we have \[\left\lfloor \sqrt{6(n+\varepsilon(S))-47/4}+1/2\right\rfloor\leq\psi(G)\leq\psi_s(G)\leq\left\lfloor \sqrt{6(n+\varepsilon(S))-47/4}+1/2\right\rfloor\]and the result follows.

ii)
Assume that $6t^2+3t+1-\varepsilon(G)+\varphi(c)\leq n<N=6(t+1)^2+3(t+1)+1-\varepsilon(G)+\varphi(c)$ for some natural number $t$. Let $G$ be the $S$-graph of order $6t^2+3t+1-\varepsilon(G)+\varphi(c)$ constructed in the proof of Theorem \ref{teo4} i); and let $H$ be the graph of order $n$ obtained by adding $n-(6t^2+3t+1-\varepsilon(G)+\varphi(c))$ vertices to $G$. It is clearly that $H$ is an $S$-graph of order $n$ with achromatic number $6t+1$. Since $n<N$, we have that \[\left\lfloor\sqrt{6(n+\varepsilon(S))-47/4}+1/2\right \rfloor<\left\lfloor\sqrt{6(N+\varepsilon(S))-47/4}+1/2\right \rfloor=6(t+1)+1=\psi(H)+6.\]
Therefore \[\psi(H) \geq \left\lfloor\sqrt{6(n+\varepsilon(S))-47/4}+1/2\right \rfloor-5\]
and the result follows.
\end{proof}

%%%%%%%%%%%%%%%%%%%%%%%%%%%%%%%%%%%%%%%%%%%%%%%%%%%%%%%%%%%%%%%%%%%%%%%%%%%%%%%%%%%%%%%%%%%%%%%%%%%%%%%%%
\section{The Platonic graphs} \label{The Platonic graphs}
In this section, we show the exact values of the achromatic numbers for the Platonic graphs.
 
We recall that a \emph{Platonic graph} is the skeleton of a Platonic solid. The five Platonic graphs are the tetrahedral graph, cubical graph, octahedral graph, dodecahedral graph, and icosahedral graph. Table \ref{Tab1} shows their order, regularity and achromatic numbers in the columns 2, 3, 4 and 5, respectively. The last column explains how to get the upper bound $k_0$.
\begin{table}[!htbp]
\begin{center}
\begin{tabular}{|l|l|l|l|l|l|}
\hline 
Platonic graphs & $n$ & $r$ & $\psi$ & $\psi_s$ & Upper bound $k_{0}$\tabularnewline
\hline 
\hline 
Tetrahedral & $4$ & $3$ & $4$ & $4$ & $k_0=n=4$ since it is $K_4$.\tabularnewline
\hline 
\multirow{2}{*}{Cubical} & \multirow{2}{*}{$8$} & \multirow{2}{*}{$3$} & \multirow{2}{*}{$4$} & \multirow{2}{*}{$4$} & If a complete coloring contains a color class of size $1$, \tabularnewline
&  &  &  &  &  $k_{0}=r+1=4$, otherwise, $k_{0}=n/2=4$ then $k_{0}=4$.\tabularnewline
\hline Octahedral & $6$ & $4$ & $3$ & $4$ & It is the line graph of $K_{4}$, see \cite{MR3249588,MR3774452}.\tabularnewline
\hline 
\multirow{2}{*}{Dodecahedral} & \multirow{2}{*}{$20$} & \multirow{2}{*}{$3$} & \multirow{2}{*}{$7$} & \multirow{2}{*}{$7$} & If a complete coloring contains a color class of size $2$, \tabularnewline
&  &  &  &  & $k_{0}=1+2r=7$, otherwise,  $k_{0}=\left\lfloor n/3\right\rfloor =6$ then $k_{0}=7$.\tabularnewline
\hline 
\multirow{2}{*}{Icosahedral} & \multirow{2}{*}{$12$} & \multirow{2}{*}{$5$} & \multirow{2}{*}{\textbf{$6$}} & \multirow{2}{*}{$6$} & If a complete coloring contains a color class of size $1$, 
\tabularnewline
&  &  &  &  & $k_{0}=1+r=6$, otherwise, $k_{0}=n/2=6$ then $k_{0}=6$.\tabularnewline
\hline 
\end{tabular}
\caption{Exact values of the achromatic numbers of the Platonic graphs.}\label{Tab1}
\end{center}
\end{table}

The lower bound is illustrated in Figure \ref{Fig11} given by complete colorings of the Platonic graphs. The octahedral graph is twice because it has different values for the achromatic and pseudoachromatic numbers, respectively.
\begin{figure}[!htbp]
\begin{center}
\includegraphics{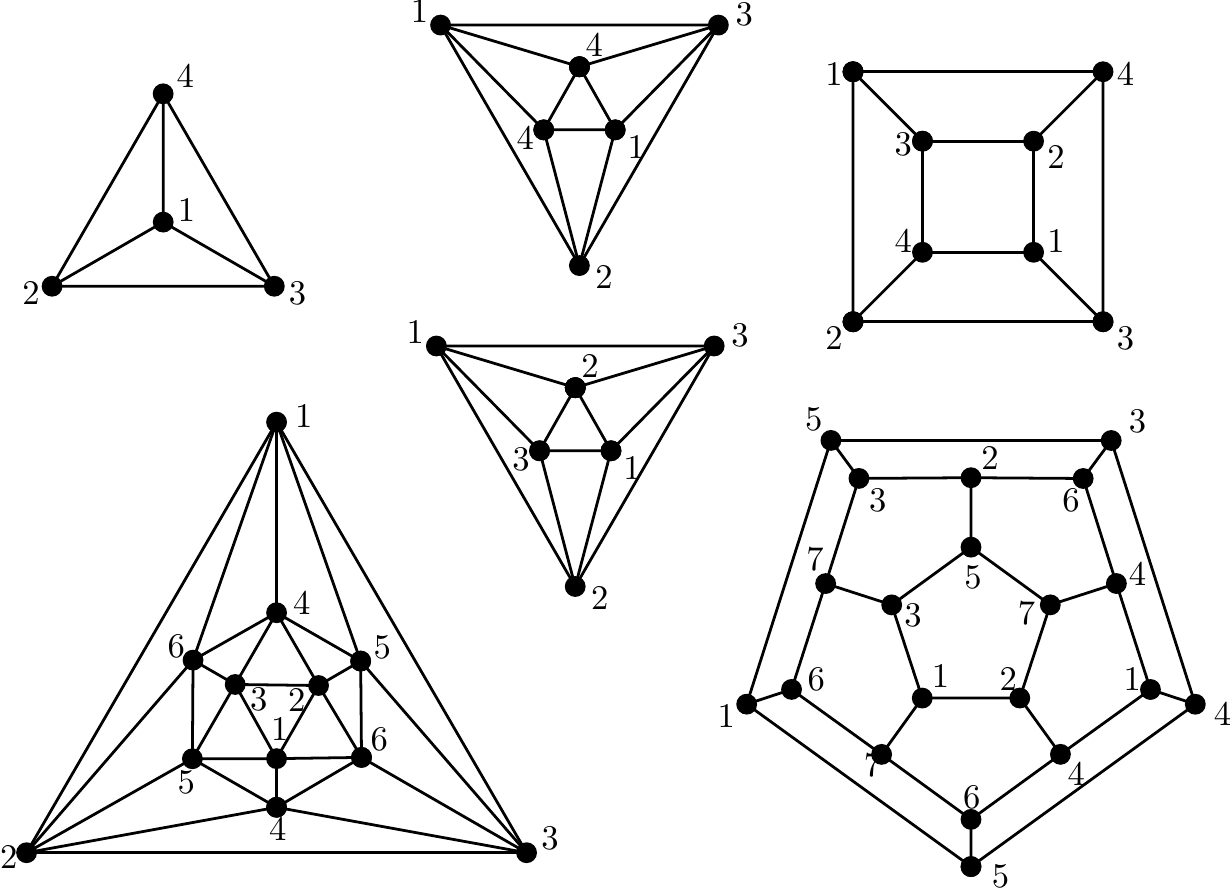}
\caption{Complete colorings of the Platonic graphs with the largest possible number of colors.}\label{Fig11}
\end{center}
\end{figure}
%%%%%%%%%%%%%%%%%%%%%%%%%%%%%%%%%%%%%%%%%%%%%%%%%%%%%%%%%%%%%%%%%%%%%%%%%%%%%%%%%%%%%%%%%%%%%%%%%%%%%%%%%
\section*{Acknowledgments}
The authors wish to thank the anonymous referees of this paper for their suggestions and remarks.

Part of the work was done during the I Taller de Matem{\' a}ticas Discretas, held at Campus-Juriquilla, Universidad Nacional Aut{\' o}noma de M{\' e}xico, Quer{\' e}taro City, Mexico on July 28--31, 2014. Part of the results of this paper was announced at the XXX Coloquio V{\' i}ctor Neumann-Lara de Teor{\' i}a de Gr{\' a}ficas, Combinatoria y sus Aplicaciones in Oaxaca, Mexico on March 2--6, 2015.

G. A-P. partially supported by CONACyT-Mexico, grant 282280; and PAPIIT-Mexico, grants IN106318, IN104915, IN107218. C. R-M. partially supported by PAPIIT-Mexico, grant IN107218 and PAIDI/007/18.
%%%%%%%%%%%%%%%%%%%%%%%%%%%%%%%%%%%%%%%%%%%%%%%%%%%%%%%%%%%%%%%%%%%%%%%%%%%%%%%%%%%%%%%%%%%%%%%%%%%%%%%%%

\end{document}